  \newfont\fiverm{cmr5}
\newtheorem{thm}{Theorem}[section]
\newtheorem{lem}[thm]{Lemma}
\newtheorem{prop}[thm]{Proposition}
\newtheorem{thm-con}[thm]{Theorem-Conjecture}
\numberwithin{equation}{section}
\theoremstyle{definition}
\newcommand{\f}{\Bbb F}
\begin{document}

\title[Determination of a Class of Permutation Trinomials]{Determination of a Class of Permutation Trinomials in Characteristic Three}


\author{Xiang-dong Hou}
\address{Department of Mathematics and Statistics,
University of South Florida, Tampa, FL 33620}
\email{xhou@usf.edu}


\author{Ziran Tu*}
\address{School of Mathematics and Statistics, Henan University of Science and
Technology, Luoyang 471003, China}
\email{tuziran@yahoo.com}

\author{Xiangyong Zeng$\dagger$}
\address{Faculty of Mathematics and Statistics, Hubei Key Laboratory of Applied Mathematics, Hubei
University, Wuhan, 430062,
 China}
\email{xiangyongzeng@aliyun.com}
\thanks{$\dagger$ Research partially supported by National Natural Science Foundation of China under grant No. 61761166010.}

\keywords{finite field, Hasse-Weil bound, permutation polynomial, resultant}

\subjclass[2010]{11T06, 11T55, 14H05}

\begin{abstract}
Let $f(X)=X(1+aX^{q(q-1)}+bX^{2(q-1)})\in\f_{q^2}[X]$, where $a,b\in\f_{q^2}^*$. In a series of recent papers by several authors, sufficient conditions on $a$ and $b$ were found for $f$ to be a permutation polynomial (PP) of $\f_{q^2}$ and, in characteristic $2$, the sufficient conditions were shown to be necessary. In the present paper, we confirm that in characteristic 3, the sufficient conditions are also necessary. More precisely, we show that when $\text{char}\,\f_q=3$, $f$ is a PP of $\f_{q^2}$ if and only if $(ab)^q=a(b^{q+1}-a^{q+1})$ and $1-(b/a)^{q+1}$ is a square in $\f_q^*$.
\end{abstract}

\maketitle

\section{Introduction}

Let $\f_q$ be the finite field with $q$ elements and let $p=\text{char}\,\f_q$. A polynomial $f\in\f_q[X]$ is called a permutation polynomial (PP) of $\f_q$ if it induces a permutation of $\f_q$. Classifications of PPs with simple or prescribed algebraic forms are important and difficult questions. In the present paper, we consider the polynomials of the form
\begin{equation}\label{1.1}
f(X)=X(1+aX^{q(q-1)}+bX^{2(q-1)})\in\f_{q^2}[X],
\end{equation}
where $a,b\in\f_{q^2}^*$. These polynomials were studied in two recent papers by Tu, Zeng, Li and Helleseth \cite{Tu-Zeng-Li-Helleseth-FFA-2018} and by Tu and Zeng \cite{Tu-Zeng-CC}. Sufficient conditions on the coefficients were found for $f(X)$ to be a PP of $\f_{q^2}$:

\begin{thm}\label{T1.1}
$f(X)$ is a PP of $\f_{q^2}$ if $a,b\in\f_{q^2}^*$ satisfy one of the following sets of conditions according to the characteristic $p$.
\begin{itemize}
\item[(i)] \cite{Tu-Zeng-Li-Helleseth-FFA-2018} $p=2$,
\begin{equation}\label{1.2}
b(1+a^{q+1}+b^{q+1})+a^{2q}=0
\end{equation}
and
\begin{equation}\label{1.3}
\begin{cases}
\displaystyle\text{\rm Tr}_{q/2}\Bigl(1+\frac 1{a^{q+1}}\Bigr)=0&\text{if}\ b^{q+1}=1,\vspace{0.5em}\cr
\displaystyle\text{\rm Tr}_{q/2}\Bigl(\frac {b^{q+1}}{a^{q+1}}\Bigr)=0&\text{if}\ b^{q+1}\ne 1.
\end{cases}
\end{equation}

\item[(ii)] \cite{Tu-Zeng-CC} $p=3$,
\begin{equation}\label{1.4}
\begin{cases}
a^qb^q=a(b^{q+1}-a^{q+1}),\vspace{0.3em}\cr
\displaystyle 1-\Bigl(\frac ba\Bigr)^{q+1}\ \text{is a square in}\ \f_q^*.
\end{cases}
\end{equation}

\item[(iii)] \cite{Tu-Zeng-CC} $p>3$, either
\begin{equation}\label{1.5}
\begin{cases}
a^qb^q=a(b^{q+1}-a^{q+1}),\vspace{0.3em}\cr
\displaystyle 1-4\Bigl(\frac ba\Bigr)^{q+1}\ \text{is a square in}\ \f_q^*,
\end{cases}
\end{equation}
or
\begin{equation}\label{1.6}
\begin{cases}
a^{q-1}+3b=0,\vspace{0.3em}\cr
\displaystyle-3\Bigl(1-4\Bigl(\frac ba\Bigr)^{q+1}\Bigr)\ \text{is a square in}\ \f_q^*.
\end{cases}
\end{equation}
\end{itemize}
\end{thm}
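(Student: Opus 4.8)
The plan is to apply the standard multiplicative reduction for polynomials of the shape $X^r h(X^{(q^2-1)/d})$. Writing $f(X)=Xh(X^{q-1})$ with $h(X)=1+aX^q+bX^2$ and taking $d=q+1$, $(q^2-1)/d=q-1$, $r=1$, the well-known criterion (Park--Lee, Zieve) says that $f$ is a PP of $\f_{q^2}$ if and only if $\gcd(1,q-1)=1$ (automatic) and $g(X):=Xh(X)^{q-1}$ permutes the group $\mu_{q+1}=\{x\in\f_{q^2}^*:x^{q+1}=1\}$ of $(q+1)$-st roots of unity. So the entire problem is to show that the conditions \eqref{1.4} force $g$ to be a bijection of $\mu_{q+1}$.

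First I would put $g$ in usable form on $\mu_{q+1}$. For $x\in\mu_{q+1}$ one has $x^q=x^{-1}$, hence $h(x)=1+ax^{-1}+bx^2$ and $x\,h(x)=D(x):=bx^3+x+a$. A direct Frobenius computation (using $a^{q^2}=a$, $b^{q^2}=b$, and $x^q=x^{-1}$) gives $h(x)^q=1+a^qx+b^qx^{-2}$, whence
\[
g(x)=x\,\frac{h(x)^q}{h(x)}=\frac{N(x)}{D(x)},\qquad N(x):=a^qx^3+x^2+b^q.
\]
The same computation yields the two key identities $N(x)^q=x^{-3}D(x)$ and $D(x)^q=x^{-3}N(x)$, valid for all $x\in\mu_{q+1}$; together they give $g(x)^{q+1}=1$, so $g$ maps $\mu_{q+1}$ into itself whenever $D(x)\ne0$.

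Next I would settle well-definedness, namely $h(x)\ne0$ (equivalently $D(x)\ne0$) for every $x\in\mu_{q+1}$. If $D(x_0)=0$ then $N(x_0)=0$ by the identity above, and eliminating $x_0^3$ between $N(x_0)=D(x_0)=0$ produces the quadratic $bx_0^2-a^qx_0+(b^{q+1}-a^{q+1})=0$. Here the first relation in \eqref{1.4} enters: it rewrites $b^{q+1}-a^{q+1}=a^{q-1}b^q$, and a short norm computation on this quadratic (testing whether $x_0^{q+1}=1$ is possible) shows it has no root on $\mu_{q+1}$. Thus $g\colon\mu_{q+1}\to\mu_{q+1}$ is a genuine map. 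The heart of the argument is then injectivity. Suppose $g(x)=g(y)$ with $x,y\in\mu_{q+1}$; then $N(x)D(y)-N(y)D(x)=0$. Expanding and using the characteristic-three identity $x^3-y^3=(x-y)^3$ --- the feature that lets $p=3$ behave like $p=2$ --- every cubic difference collapses, and one factors out $(x-y)$ to obtain a symmetric equation $\Phi(x,y)=0$ of degree four. Rewriting $\Phi$ in the symmetric coordinates $s=x+y$, $p=xy$ and imposing the unit-circle constraints $p^{q+1}=1$ and $s^qp=s$ gives a system over $\f_{q^2}$; using the first relation of \eqref{1.4} to express $a$ through $b/a$ and reducing, the solvability collapses to whether $1-(b/a)^{q+1}$ is a square in $\f_q^*$. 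Precisely the square hypothesis in \eqref{1.4} forces the relevant discriminant to be a square, which pins the only solution to $x=y$. Hence $g$ is injective, and being a self-map of the finite set $\mu_{q+1}$ it is a bijection, so $f$ is a PP of $\f_{q^2}$.

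The main obstacle is this injectivity step: organizing the degree-four symmetric relation $\Phi=0$ together with the two unit-circle constraints, and threading both conditions of \eqref{1.4} through the elimination so that exactly the square condition rules out every solution with $x\ne y$. The earlier steps are essentially bookkeeping, and the characteristic-three collapse $x^3-y^3=(x-y)^3$ is what keeps this final computation at a manageable degree.
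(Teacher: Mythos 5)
Your reduction to showing that $g(X)=(a^qX^3+X^2+b^q)/(bX^3+X+a)$ permutes $\mu_{q+1}$, together with the identities $N(x)^q=x^{-3}D(x)$ and $D(x)^q=x^{-3}N(x)$ on $\mu_{q+1}$, is correct and coincides exactly with the paper's Section 2 setup (see \eqref{2.1}--\eqref{2.2}); note also that your argument only addresses part (ii) of the theorem, since the collapse $x^3-y^3=(x-y)^3$ is special to characteristic $3$. Beyond that point there are two genuine gaps. First, the well-definedness step is wrong as stated: the first relation in \eqref{1.4} alone does \emph{not} prevent the quadratic $bx^2-a^qx+a^{q-1}b^q$ from having a root on $\mu_{q+1}$. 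Multiplying that quadratic by $ax^{-1}$ and using $x^{-1}=x^q$ turns it into $\mathrm{Tr}_{q^2/q}(abx)=a^{q+1}$, and an element $t\in\f_q$ is the trace of some $z\in\f_{q^2}$ with $z^{q+1}=(ab)^{q+1}$ if and only if $t^2-4(ab)^{q+1}$ is a nonsquare or zero in $\f_q$. Since $4=1$ here, the quadratic has no root on $\mu_{q+1}$ precisely when $a^{2(q+1)}-(ab)^{q+1}=a^{2(q+1)}\bigl(1-(b/a)^{q+1}\bigr)$ is a nonzero square, i.e., precisely when the \emph{second} condition of \eqref{1.4} holds. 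So the square hypothesis is already indispensable at this stage, and the ``short norm computation'' you attribute to the first relation alone cannot exist.

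Second, and more seriously, the injectivity step---which you yourself flag as ``the main obstacle''---is not carried out. After passing to $s=x+y$, $p=xy$ with the constraints $p^{q+1}=1$ and $s^qp=s$, the assertion that ``the solvability collapses to whether $1-(b/a)^{q+1}$ is a square in $\f_q^*$'' \emph{is} Theorem~\ref{T1.1}(ii); stating it is not proving it, and no elimination is actually performed. Moreover, the mechanism you describe is suspect: in characteristic $3$, unique solvability of the relevant cubic is governed by Williams' theorem \cite{Williams-JNT-1975}, which the paper invokes after \eqref{2.10}, and there uniqueness corresponds to a \emph{nonsquare} condition; a square ``discriminant'' does not by itself ``pin the only solution to $x=y$.'' The correct shape of the argument (as in the first gap) should be that off-diagonal solutions force a certain quantity to be a nonsquare or zero, which the hypothesis excludes. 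For comparison, the paper does not reprove Theorem~\ref{T1.1} at all---part (ii) is quoted from \cite{Tu-Zeng-CC}---and its Section 7 observes that sufficiency of \eqref{1.4} follows by reversing a small portion of the necessity proof: via Proposition~\ref{P2.1}, the reduction \eqref{2.8}--\eqref{2.10}, and Williams' theorem, one shows that under \eqref{1.4} the quantity $E(y)$ is a square for every $y\in\f_q$, so each $y$ has a unique preimage. Either that route or an honest completion of your symmetric-coordinate elimination would be needed; as written, your proposal is a plan whose two substantive steps are, respectively, incorrectly justified and absent.
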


Bartoli \cite{Bartoli-FFA-2018} proved that for $p=2$, the conditions in Theorem~\ref{T1.1} (i) are also necessary for $f(X)$ to be a PP of $\f_{q^2}$. For a different proof for the necessity and sufficiency of the conditions in Theorem~\ref{T1.1} (i), see \cite{Hou-arXiv1803.04071}. In the present paper, we show that for $p=3$, the conditions in Theorem~\ref{T1.1} (ii) are also necessary for $f(X)$ to be a PP of $\f_{q^2}$. To put our result in perspective, we mention that there have been numerous studies on PPs of $\f_{q^2}$ of the form
\[
f_{a,b,s_1,s_2}(X)=X(1+aX^{s_1(q-1)}+bX^{s_2(q-1)})\in\f_{q^2}[X],
\]
where $1\le s_1,s_2\le q$ \cite{Bartoli-FFA-2018, Bartoli-Giulietti-FFA-2018, Bartoli-Quoos-DCC-2018, Ding-Qu-Wang-Yuan-Yuan-SIAMJDM-2015, Gupta-Sharma-FFA-2016, Hou-arXiv1803.04071, Lee-Park-AMS-1997, Li-Helleseth-arXiv1606.03768, Li-Helleseth-CC-2017, Tu-Zeng-CC, Tu-Zeng-Li-Helleseth-FFA-2018}. For a given pair $(s_1,s_2)$, necessary and sufficient conditions on $a,b$ for $f_{a,b,s_1,s_2}$ to be a PP of $\f_{q^2}$ have been determined only in the following cases: $(s_1,s_2)=(1,2)$ and $p$ is arbitrary \cite{Hou-FFA-2015b}; $(s_1,s_2)=(-1/2,1/2)$ and $p=2$ \cite{Tu-Zeng-FFA-2018}; $(s_1,s_2)=(q,2)$ and $p=2$ \cite{Bartoli-FFA-2018, Hou-arXiv1803.04071, Tu-Zeng-Li-Helleseth-FFA-2018}.

The method of the present is similar to that of \cite{Hou-arXiv1803.04071}. Our main tools are the Hasse-Weil bound and resultants of polynomials. The paper is organized as follows:

Section 2 contains some preparatory results. It is well known that $f(X)$ in \eqref{1.1} is a PP of $\f_{q^2}$ if and only if an associated rational function of degree 3 permutes the field $\f_q$, and a theorem by K. S. Williams tells when the latter happens. The Hasse-Weil bound provides additional information which is crucial in our proof. In Section~3, we state the main theorem and lay out a proof plan consisting of three cases depending on $a$ and $n$, where $q=3^n$. The three cases are treated in Sections~4 -- 6, respectively. The basic approach in the three cases is the same: computation and analysis of resultants of relevant polynomials. However, the complexity of the computations involved increases considerably from case 1 to case 3. A few brief concluding remarks are given in Section~7. The proof (Sections~4 -- 6) produces many lengthy intermediate results, which are recorded in the Appendix. If the present paper appears in a journal in the future, it is unlikely that the material in the appendix will be included. Therefore, the appendix here serves as a resource for the readers who would like to verify the proof.

\section{Preparatory Results}

Throughout the paper, $p=\text{char}\,\f_q=3$ and $f(X)$ is the polynomial in \eqref{1.1}. Let $\mu_{q+1}=\{x\in\f_{q^2}^*:x^{q+1}=1\}$. It is well known that $f(X)$ is a PP of $\f_{q^2}$ if and only if $h(X):=X(1+aX^q+bX^2)^{q-1}$ permutes $\mu_{q+1}$ \cite{Park-Lee-BAMS-2001, Wang-LNCS-2007, Zieve-PAMS-2009}. For $x\in\mu_{q+1}$ with $1+ax^q+bx^2\ne 0$, i.e., with $bx^3+x+a\ne 0$, we have
\begin{equation}\label{2.1}
h(x)=\frac{x(1+ax^q+bx^2)^q}{1+ax^q+bx^2}=\frac{a^qx^3+x^2+b^q}{bx^3+x+a}=g(x),
\end{equation}
where
\begin{equation}\label{2.2}
g(X)=\frac{a^qX^3+X^2+b^q}{bX^3+X+a}\in\f_{q^2}[X].
\end{equation}
Therefore, $f(X)$ is a PP of $\f_{q^2}$ if and only if $bX^3+X+a$ has no root in $\mu_{q+1}$ and $g(X)$ permutes $\mu_{q+1}$.

Assume that $bX^3+X+a$ has no root in $\mu_{q+1}$, which implies that $1+a+b\ne 0$. Choose $z\in\f_{q^2}\setminus\f_q$ and let $\phi(X)=(X+z^q)/(X+z)$. Then $\phi(X)$ maps $\f_q\cup\{\infty\}$ to $\mu_{q+1}$ bijectively with $\phi(\infty)=1$. Let $\psi(X)=g(1)\phi(X)=(1+a+b)^{q-1}\phi(X)$. Then $\psi^{-1}(X)$, the compositional inverse of $\psi(X)$, maps $\mu_{q+1}$ to $\f_q\cup\{\infty\}$ bijectively with $\psi^{-1}(g(1))=\infty$. Therefore, $g(X)$ permutes $\mu_{q+1}$ if and only if $\psi^{-1}\circ g\circ\phi$ permutes $\f_q\cup\{\infty\}$, i.e., if and only if $\psi^{-1}\circ g\circ\phi$ permutes $\f_q$, that is, if and only if for each $y\in\f_q$, there is a unique $x\in\f_q$ such that
\begin{equation}\label{2.3}
g\Bigl(\frac{x+z^q}{x+z}\Bigr)=(1+a+b)^{q-1}\frac{y+z^q}{y+z}.
\end{equation}
Write
\begin{equation}\label{2.4}
g\Bigl(\frac{X+z^q}{X+z}\Bigr)=\frac{A(X)}{B(X)},
\end{equation}
where
\begin{equation}\label{2.5}
A(X)=(1+a^q+b^q)X^3+(z-z^q)X^2+(z^{2q}-z^{1+q})X+b^qz^3+a^qz^{3q}+z^{1+2q},
\end{equation}
\begin{equation}\label{2.6}
B(X)=(1+a+b)X^3+(-z+z^q)X^2+(z^2-z^{1+q})X+az^3+bz^{3q}+z^{2+q}.
\end{equation}
Combining \eqref{2.3} and \eqref{2.4} gives the following proposition.

\begin{prop}\label{P2.1}
$f(X)$ is a PP of $\f_{q^2}$ if and only if
\begin{itemize}
\item[(i)] $B(X)$ has no root in $\f_q$, and
\item[(ii)] for each $y\in\f_q$, there is a unique $x\in\f_q$ such that
\begin{equation}\label{2.7}
(1+a+b)A(x)(y+z)-(1+a+b)^qB(x)(y+z^q)=0.
\end{equation}
\end{itemize}
\end{prop}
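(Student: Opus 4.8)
The plan is to obtain the proposition as the terminal step of the reduction already completed above. By the equivalence stated just before the proposition, $f$ is a PP of $\f_{q^2}$ if and only if $bX^3+X+a$ has no root in $\mu_{q+1}$ and $g$ permutes $\mu_{q+1}$; and under the first condition the maps $\phi$ and $\psi=g(1)\phi$ reduce ``$g$ permutes $\mu_{q+1}$'' to ``for each $y\in\f_q$ there is a unique $x\in\f_q$ with \eqref{2.3}''. So I would verify two things: that the non-vanishing condition on $\mu_{q+1}$ is captured exactly by (i) together with an inequality $1+a+b\ne0$ that (ii) supplies for free, and that \eqref{2.3} is equivalent to \eqref{2.7} for $x\in\f_q$ once (i) holds.

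For the first, I would use the factorization $B(X)=(X+z)^3\bigl(b\phi(X)^3+\phi(X)+a\bigr)$, immediate from \eqref{2.6} and $\phi(X)=(X+z^q)/(X+z)$. Since $x+z\ne0$ for $x\in\f_q$, we get $B(x)=0$ iff $\phi(x)$ is a root of $bX^3+X+a$. As $\phi$ maps $\f_q$ bijectively onto $\mu_{q+1}\setminus\{1\}$ (note $\phi(x)=1$ would force $z^q=z$), condition (i) is equivalent to ``$bX^3+X+a$ has no root in $\mu_{q+1}\setminus\{1\}$''. The one remaining point $1\in\mu_{q+1}$, which corresponds to $x=\infty$, is a root exactly when $1+a+b=0$; hence ``no root in $\mu_{q+1}$'' is equivalent to ``(i) holds and $1+a+b\ne0$''.

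For the second, I would substitute \eqref{2.4} into \eqref{2.3}, writing it as $A(x)/B(x)=(1+a+b)^{q-1}(y+z^q)/(y+z)$, and clear denominators. Since $y+z\ne0$ (as $z\notin\f_q$) and $B(x)\ne0$ under (i), multiplying through by $(1+a+b)B(x)(y+z)$ produces \eqref{2.7}, and every step reverses; so, \emph{given} (i), the $x\in\f_q$ solving \eqref{2.3} are exactly those solving \eqref{2.7}. Thus (ii) says precisely that the reduced map permutes $\f_q$.

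Splicing the pieces into an ``if and only if'' then requires only one delicate point, the degenerate case $1+a+b=0$ silently excluded by the definition of $\psi=g(1)\phi$. I would dispose of it directly: if $1+a+b=0$ then both coefficients $1+a+b$ and $(1+a+b)^q$ in \eqref{2.7} vanish, so \eqref{2.7} holds for every $x\in\f_q$, and since $q\ge3$ no $y$ has a unique solution; hence (ii) already forces $1+a+b\ne0$. With this, the forward direction runs the reduction and reads off (i) and (ii), while the converse uses (ii) to supply $1+a+b\ne0$, then (i) to recover the non-vanishing on $\mu_{q+1}$, and finally (i) and (ii) to conclude that $g$ permutes $\mu_{q+1}$. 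I expect the main difficulty to be not computational but this bookkeeping at the two boundary points $x=\infty\leftrightarrow1$ and the coefficient collapse at $1+a+b=0$; the rest is the completed reduction plus clearing denominators.
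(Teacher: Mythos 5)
Your argument is correct and takes essentially the same route as the paper, whose proof of Proposition~\ref{P2.1} consists of combining \eqref{2.3} and \eqref{2.4} after the reduction to the fractional map $g$ on $\mu_{q+1}$. The details you supply --- the factorization $B(X)=(X+z)^3\bigl(b\phi(X)^3+\phi(X)+a\bigr)$ matching (i) with the non-vanishing of $bX^3+X+a$ on $\mu_{q+1}\setminus\{1\}$, and the observation that (ii) by itself rules out $1+a+b=0$ --- are precisely the points the paper leaves implicit, and you have verified them correctly.
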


In our proof of the main result (Theorem~\ref{T3.1}), equation~\eqref{2.7} can be simplified to the form
\begin{equation}\label{2.8}
C_3x^3+C_2(y)x^2+C_1(y)x+C_0(y)=0,
\end{equation}
where $C_0(Y), C_1(Y), C_2(Y)\in\f_q[Y]$ are of degree $\le 1$ and $C_3\in\f_q^*$. Write $c_i=C_i(y)$, $0\le i\le 3$. Assume that $c_2\ne 0$ and let $x=x_1+c_1/c_2$. Then \eqref{2.8} becomes
\begin{equation}\label{2.9}
\frac{c_3}{c_2}x_1^3+x_1^2+\frac{c_1^3c_3-c_1^2c_2^2+c_0c_2^3}{c_2^4}=0.
\end{equation}
Further assume that $c_1^3c_3-c_1^2c_2^2+c_0c_2^3\ne 0$, and let $x_2=1/x_1$. Then \eqref{2.9} becomes
\begin{equation}\label{2.10}
x_2^3+\frac{c_2^4}{c_1^3c_3-c_1^2c_2^2+c_0c_2^3}x_2+\frac{c_2^3c_3}{c_1^3c_3-c_1^2c_2^2+c_0c_2^3}=0.
\end{equation}
By a theorem of K. S. Williams \cite[Theorem~2]{Williams-JNT-1975}, \eqref{2.10} has a unique solution $x_2\in\f_q$ if and only if $-(c_1^3c_3-c_1^2c_2^2+c_0c_2^3)$ is a nonsquare in $\f_q^*$.

\begin{lem}\label{L2.2}
Assume that $q\ge 3^3$ and let $k$ be a nonsquare in $\f_q^*$. In the above notation, assume that $C_2(Y)\ne 0$ and let
\begin{equation}\label{2.11}
E(Y)=-k(C_1(Y)^3C_3-C_1(Y)^2C_2(Y)^2+C_0(Y)C_2(Y)^3)\in\f_q[Y].
\end{equation}
If for every $y\in\f_q$ with $C_2(y)E(y)\ne 0$, $E(y)$ is a square in $\f_q^*$, then there exists $D(Y)\in\f_q[Y]$ such that
\begin{equation}\label{2.12}
E(Y)=D(Y)^2.
\end{equation}
\end{lem}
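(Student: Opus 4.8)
The statement is a quantitative version of the classical fact that a polynomial over $\f_q$ which takes square values at "many" points must itself be (a constant multiple of) a perfect square. The plan is to study the curve attached to $E(Y)$ and apply the Hasse-Weil bound, which the introduction has flagged as the main tool.

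\medskip

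First I would set up the dichotomy according to the structure of $E(Y)$. Let $E(Y)=E_0(Y)^2 E_1(Y)$ be the factorization in which $E_1(Y)$ is squarefree (i.e. extract the square part of $E$). The claim \eqref{2.12} is equivalent to saying that $E_1(Y)$ is a nonzero constant which is a square in $\f_q^*$. So the goal becomes: rule out the possibility that $E_1(Y)$ has positive degree, and then deal with the constant case. Suppose toward a contradiction that $\deg E_1\ge 1$, and consider the affine curve
\begin{equation}\label{2.13}
\mathcal C:\quad W^2=E_1(Y)
\end{equation}
over $\f_q$. Since $E_1$ is squarefree of degree $\ge 1$, the curve $\mathcal C$ is (geometrically) irreducible, and its smooth projective model has genus $g$ bounded in terms of $\deg E_1\le \deg E=3\deg C_i+\cdots$, a small explicit constant coming from \eqref{2.11}. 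The Hasse-Weil bound then gives $\#\mathcal C(\f_q)\ge q+1-2g\sqrt q - (\text{small correction})$.

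\medskip

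The next step is to convert the hypothesis into a counting contradiction. By assumption, for every $y\in\f_q$ with $C_2(y)E(y)\ne 0$, the value $E(y)=E_0(y)^2E_1(y)$ is a square in $\f_q^*$; since $E_0(y)^2$ is already a square, this forces $E_1(y)$ to be a square in $\f_q$ whenever $E_0(y)C_2(y)\ne 0$ and $E_1(y)\ne0$. Thus, apart from the boundedly many $y$ that are roots of $E_0 C_2 E_1$, every $y\in\f_q$ makes $E_1(y)$ a nonzero square, hence yields two points $(y,\pm\sqrt{E_1(y)})$ on $\mathcal C$. Therefore the number of $y\in\f_q$ for which $E_1(y)$ is a nonsquare in $\f_q^*$ is at most a constant (independent of $q$), coming from $\deg(E_0C_2E_1)$. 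On the other hand, for a squarefree $E_1$ of positive degree, a standard Weil-type estimate on the character sum $\sum_{y\in\f_q}\eta(E_1(y))$, where $\eta$ is the quadratic character, shows that the count of nonsquare values is $\tfrac12 q + O(\sqrt q)$, which grows with $q$. For $q\ge 3^3$ (and after checking that the implied constants are genuinely absolute, tied only to $\deg E$), these two estimates are incompatible, giving the contradiction. Hence $\deg E_1=0$.

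\medskip

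Finally, once $E_1$ is a constant $c\in\f_q^*$, the hypothesis that $E(y)=E_0(y)^2 c$ is a square at some point $y$ with $C_2(y)E(y)\ne 0$ forces $c$ to be a square (such a $y$ exists since $C_2E\ne 0$ as polynomials and $q$ is large enough that they cannot vanish on all of $\f_q$); writing $c=d^2$ gives $E(Y)=(dE_0(Y))^2$, which is \eqref{2.12}. The main obstacle I anticipate is not the Hasse-Weil application itself but the bookkeeping needed to make the bounds effective for the specific threshold $q\ge 3^3$: one must pin down $\deg E$ from \eqref{2.11} (it is at most $3$ in each $C_i$, but the $\deg\le 1$ constraints on $C_0,C_1,C_2$ keep it small), bound the genus and the number of exceptional $y$ explicitly, and verify that the resulting inequality $\tfrac12 q - O(\sqrt q) > \text{const}$ already holds at $q=27$. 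This is exactly the kind of explicit Hasse-Weil estimate used in \cite{Hou-arXiv1803.04071}, so I would follow that template closely.
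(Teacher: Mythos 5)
Your plan is correct and rests on the same engine as the paper's proof, namely the Hasse--Weil bound for the quadratic cover attached to $E(Y)$, but the two arguments are packaged differently. The paper works directly with $F(X,Y)=X^2-E(Y)$: the hypothesis yields at least $2(q-5)$ points on $V_{\f_q^2}(F)$, the genus bound ($g\le 1$, since $\deg E\le 4$) shows an absolutely irreducible $F$ could carry at most $q+3+2q^{1/2}<2q-5$ of them, hence $F=(X+D(Y))(X-D(Y))$ over $\overline\f_q$, and a short Galois argument then forces $D\in\f_q[Y]$. You instead extract the square part $E=E_0^2E_1$ over $\f_q$ at the outset, so that the residual curve $W^2=E_1(Y)$ is automatically absolutely irreducible when $\deg E_1\ge 1$ and the rationality of the eventual square root comes for free from the factorization being over $\f_q$; you then contradict the fact that $E_1$ takes nonzero square values at all but at most $5$ points. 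Both routes are sound: yours trades the paper's Galois-descent step for a squarefree factorization, which is a fair exchange.

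One caution on the numerics, which you rightly flagged as the delicate point. If you run the contradiction by lower-bounding the number of \emph{nonsquare} values of $E_1$ via the textbook estimate $\bigl|\sum_y\eta(E_1(y))\bigr|\le(m-1)q^{1/2}\le 3q^{1/2}$, you get only $\tfrac12(q-4-3q^{1/2})\approx 3.7$ at $q=27$, which does not exceed the allowed $5$ exceptional values, so no contradiction results in that form. The inequality must be set up the other way: the hypothesis forces $\sum_y\eta(E_1(y))\ge q-10=17$ at $q=27$, which does exceed $3q^{1/2}\approx 15.6$; equivalently, count points on $\mathcal C$ directly, $2(q-5)=44$ versus the Weil ceiling $q+1+2gq^{1/2}+2\le 40$. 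So the argument closes at the stated threshold $q\ge 3^3$, but only in that formulation; the version you wrote down in terms of the density of nonsquare values is too lossy at $q=27$.
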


\begin{proof}
We may assume that $E(Y)\ne 0$. Hence $C_2(Y)E(Y)$ has at most $5$ roots in $\f_q$. Let
\[
F(X,Y)=X^2-E(Y)
\]
and
\[
V_{\f_q^2}(F)=\{(x,y)\in\f_q^2:F(x,y)=0\}.
\]
By assumption,
\begin{equation}\label{2.13}
|V_{\f_q^2}(F)|\ge 2(q-5)+5=2q-5.
\end{equation}
We claim that $F(X,Y)$ is not irreducible over $\overline\f_q$ (the algebraic closure of $\f_q$). Otherwise, let {\tt y} be transcendental over $\f_q$ and let {\tt x} be a root of $F(X,{\tt y})$. Then $\f_q({\tt x}, {\tt y})/\f_q$ is a function field with constant field $\f_q$. Let $(E({\tt y}))_0$ and $(E({\tt y}))_\infty$ denote the zero devisor and the pole divisor of $E({\tt y})$ in the rational function field $\f_q({\tt y})$. Then $\deg (E({\tt y}))_0=\deg (E({\tt y}))_\infty=\deg E(Y)\le 4$ and $(E({\tt y}))_\infty=\deg E(Y)\cdot P_\infty$, where $P_\infty$ is the place of $\f_q({\tt y})/\f_q$ at $\infty$. By \cite[Proposition III.7.3 (c)]{Stichtenoth-1993}, the genus $g$ of $\f_q({\tt x},{\tt y})/\f_q$ satisfies
\[
g\le 1-2+\frac 12\bigl(\deg E(Y)+2-\text{gcd}(2,\deg E(Y))\bigr)\le 1.
\]
The affine curve $V_{\f_q^2}(F)$ has at most two singular points in $\f_q^2$: $(0,y)$, where $y$ is a multiple root of $E(Y)$. Let $N_1$ denote the number of degree 1 places of $\f_q({\tt x},{\tt y})/\f_q$. Then by the Hasse-Weil bound \cite[V.2.3]{Stichtenoth-1993},
\[
|V_{\f_q^2}(F)|\le N_1+2\le q+1+2gq^{1/2}+2\le q+3+2q^{1/2}<2q-5,
\]
which is a contradiction to \eqref{2.13}. Hence the claim is proved.

Write $F(X,Y)=(X+D(Y))(X-D(Y))$, where $D(Y)\in\overline\f_q[Y]$. If $D(Y)\notin\f_q[Y]$, choose $\sigma\in\text{Aut}(\overline\f_q/\f_q)$ such that $\sigma(X+D(Y))\ne X+D(Y)$. Then $\sigma(X+D(Y))=X-D(Y)$ and hence
\[
V_{\f_q^2}(F)\subset V_{\f_q^2}(X+D(Y))\cap V_{\f_q^2}(X-D(Y)).
\]
It follows that $|V_{\f_q^2}(F)|\le\deg D(Y)\le 4$, which is a contradiction to \eqref{2.13}. Thus $D(Y)\in\f_q[Y]$ and the proof of the lemma is complete.
\end{proof}

In \eqref{2.12}, write
\begin{equation}\label{2.14}
E(Y)=e_0+e_1Y+e_2Y^2+e_3Y^3+e_4Y^4
\end{equation}
and
\begin{equation}\label{2.15}
D(Y)=D_0+D_1Y+D_2Y^2,
\end{equation}
where $e_i,D_j\in\f_q$. Then \eqref{2.12} is equivalent to
\begin{equation}\label{2.16}
\begin{cases}
e_0=D_0^2,\cr
e_1=-D_0D_1,\cr
e_2=D_1^2-D_0D_2,\cr
e_3=-D_1D_2,\cr
e_4=D_2^2.
\end{cases}
\end{equation}
From \eqref{2.16}, we have
\begin{equation}\label{2.17}
\begin{cases}
e_0e_3^2-e_1^2e_4=0,\cr
e_3^3-e_1e_4^2-e_2e_3e_4=0.
\end{cases}
\end{equation}

In our notation, the resultant of two polynomials $P_1(X)$ and $P_2(X)$ is denoted by $\text{Res}(P_1,P_2;X)$. Our proof of the main result relies on the ability to compute $\text{Res}(P_1,P_2;X)$, where $P_1,P_2\in\f_q[X,Y_1,\dots,Y_m]$, and to factor $\text{Res}(P_1,P_2;X)$ in $\f_q[Y_1,\dots,Y_m]$. With computer assistance, the computation of $\text{Res}(P_1,P_2;X)$ is relatively easy regardless of the number $m$ of additional variables. However, factorization in $\f_q[Y_1,\dots,Y_m]$ is difficult; it appears that the existing symbolic computation softwares are not very effective for this question when $m\ge 3$. Some techniques in our proof are aimed at reducing the number of variables in the polynomials involved.

We alert the reader of a slight abuse of notation in the paper. We will encounter many expressions of the form $P(u)$, where $P\in \f_q[X]$ and $u$ is an element of $\f_q$ yet to be determined. Naturally, $P(u)$ is an element of $\f_q$. However, we frequently treat $u$ as an indeterminate and hence $P(u)$ is viewed as a polynomial in $u$ rather than an element of $\f_q$. It should be clear from the context which point of view is taken at the moment. This harmless abuse of notation allows us to avoid additional excessive notation.

\section{Main Result}

Our main result is the following theorem.

\begin{thm}\label{T3.1}
Assume that $q=3^n$. If $f(X)$ is a PP of $\f_{q^2}$, then the conditions in \eqref{1.4} are satisfied.
\end{thm}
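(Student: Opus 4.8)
The plan is to run the machinery of Section~2 in reverse: assuming $f$ is a PP of $\f_{q^2}$, I would distill from the unique-solvability condition of Proposition~\ref{P2.1} a system of polynomial identities in $a,b,a^q,b^q$ and show that this system forces \eqref{1.4}. The first step is to pin down a convenient $z\in\f_{q^2}\setminus\f_q$. Comparing \eqref{2.5} and \eqref{2.6} shows that $B(x)=A(x)^q$ for $x\in\f_q$, so the left side of \eqref{2.7} is $W-W^q$ with $W=(1+a+b)A(x)(y+z)$; hence \eqref{2.7} is equivalent to the single condition $W\in\f_q$. Choosing $z$ with $z^q=-z$ (equivalently $z^2\in\f_q^*$ a nonsquare, which exists since $-1\in\mu_{q+1}$) collapses $z-z^q$, $z^{2q}-z^{1+q}$ and the constant terms of \eqref{2.5}--\eqref{2.6} to monomials in $z$, and turns ``$W\in\f_q$'' into the vanishing of the $z$-component of $W$: a single cubic over $\f_q$ in $x$ whose coefficients are linear in $y$. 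This is exactly the normalized form \eqref{2.8} with $C_i\in\f_q[Y]$ of degree $\le 1$ and $C_3\in\f_q^*$, and it explains a posteriori why the $C_i$ lie in $\f_q[Y]$.

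With the cubic in hand I would invoke the theorem of K.~S.~Williams \cite{Williams-JNT-1975} as in \eqref{2.10}: since a unique $x$ exists for each $y$, the quantity $-(C_1(y)^3C_3-C_1(y)^2C_2(y)^2+C_0(y)C_2(y)^3)$ must be a nonsquare in $\f_q^*$ for every $y$ with $C_2(y)\ne0$ and nonzero discriminant. Fixing a nonsquare $k$ and defining $E(Y)$ as in \eqref{2.11}, this means $E(y)$ is a square for all relevant $y$, the finitely many exceptional $y$ being harmless. Assuming $q\ge3^3$, Lemma~\ref{L2.2} then upgrades this pointwise condition to the polynomial identity $E(Y)=D(Y)^2$ of \eqref{2.12}; comparing coefficients as in \eqref{2.16} yields the two closed relations \eqref{2.17} among $e_0,\dots,e_4$.

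The heart of the argument is the analysis of \eqref{2.17} once the $e_i$ are written out explicitly as polynomials in $a,b,a^q,b^q$ (the last two treated, via Frobenius, as conjugates of the first two). I would regard $a,a^q,b,b^q$ as four indeterminates over $\f_q$, so that \eqref{2.17} becomes a pair of explicit polynomial equations, and then eliminate variables by taking resultants $\mathrm{Res}(\,\cdot\,,\cdot\,;X)$ and factoring, the goal being to show that every component of the resulting variety either satisfies the algebraic identity $a^qb^q=a(b^{q+1}-a^{q+1})$ of \eqref{1.4} or is spurious (corresponding to situations already excluded, e.g.\ $C_2\equiv0$, vanishing discriminant, or $B(X)$ having a root in $\f_q$, which is forbidden by Proposition~\ref{P2.1}(i)). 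The remaining square condition of \eqref{1.4} I expect to recover from the fact that \eqref{2.16} forces $e_4=D_2^2$ and $e_0=D_0^2$: identifying the relevant leading coefficient of $E$ with a fixed $\f_q^*$-square multiple of $1-(b/a)^{q+1}$ then delivers that $1-(b/a)^{q+1}$ is a square in $\f_q^*$.

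The main obstacle is precisely this elimination-and-factorization step: as remarked after \eqref{2.17}, factoring resultants in $\f_q[Y_1,\dots,Y_m]$ is intractable for $m\ge3$ even with computer algebra, so the real labor lies in organizing the computation---through the choice $z^q=-z$, auxiliary substitutions, and exploitation of the norm/trace relations among $a,a^q,b,b^q$---to keep the number of free variables small enough that the factorizations can be completed and the spurious components discarded. I would also split the proof into cases according to whether the cubic \eqref{2.8} degenerates (whether $C_3$ or the relevant discriminant can vanish, which ties to the form of $a$) and according to the size of $n=\log_3q$, since Lemma~\ref{L2.2} needs $q\ge3^3$; the small cases $n=1,2$ must be settled by direct, essentially finite verification. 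I anticipate the complexity of the resultant computations to grow sharply from the generic case to the degenerate ones, matching the three-case organization announced for Sections~4--6 and paralleling the characteristic-$2$ treatment in \cite{Hou-arXiv1803.04071}.
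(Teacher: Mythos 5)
Your preliminary reductions are sound and coincide with the paper's: the observation that $B(x)=A(x)^q$ for $x\in\f_q$, so that \eqref{2.7} says $W\in\f_q$ for $W=(1+a+b)A(x)(y+z)$ and (with $z^q=-z$) collapses to the cubic \eqref{2.8} over $\f_q$; the application of Williams' theorem; and the upgrade via Lemma~\ref{L2.2} to the identity $E(Y)=D(Y)^2$ and the relations \eqref{2.17}. But from that point on your proposal is a plan, not a proof: the entire substantive content of the theorem lies in the elimination-and-factorization step that you explicitly defer (``the real labor lies in organizing the computation\dots''). Asserting that every component of the variety cut out by \eqref{2.17} either satisfies $a^qb^q=a(b^{q+1}-a^{q+1})$ or is spurious is a restatement of what must be proved, and it is exactly here that the difficulty is concentrated. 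Moreover, your proposed starting point --- treating $a,a^q,b,b^q$ as four independent indeterminates --- runs directly into the obstruction the paper itself flags: factorization of the resultants is already at the edge of feasibility with three $\f_q$-variables. The proof only goes through because of a normalization you do not supply: using the substitution \eqref{3.1} one reduces $a$ to a single $\f_q$-parameter in each of three cases ($a\in\f_q^*$ when $a$ is a square in $\f_{q^2}$; $a=a_1z$ with $a_1\in\f_q^*$ when $a$ is a nonsquare and $n$ is even; $a=u(1+z)$, $z^2=-1$, $u\in\f_q^*$ when $a$ is a nonsquare and $n$ is odd), and writes $b=b_1+z$ or $b=v+wz$, so that $h_1,h_2$ live in $\f_3[a_1,b_1,k]$ or $\f_3[u,v,w]$.

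A second structural point your outline misses: in the sub-cases where $b\notin\f_q$ (resp.\ $v\ne 0$), the conclusion of the analysis is not that \eqref{1.4} holds but that \emph{no} such PP exists --- one derives a contradiction, typically by showing $e_4=0$ forces $e_2$ to be a square against \eqref{3.11}/\eqref{3.38}/\eqref{3.83}, or by trapping $k$ or $w$ in a small subfield incompatible with its being a nonsquare, or by exhibiting coprime irreducible factors that cannot vanish simultaneously. Only the degenerate sub-cases $b\in\f_q$ (resp.\ $v=0$) survive, and it is there that both parts of \eqref{1.4} are read off (e.g.\ $e_4=(1+b)^3$ a square forces $1-(b/a)^{q+1}=1/(1+b)$ to be a square). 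Your suggestion that the square condition falls out of $e_4=D_2^2$ and $e_0=D_0^2$ in general is therefore not how the argument closes; as written, the proposal reproduces Sections~2--3 of the paper but leaves Sections~4--6 --- which is where the theorem is actually proved --- unexecuted.
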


We assume that $n\ge 3$ since Theorem~\ref{T3.1} is easily verified for $n\le 2$.

First observe that for $\beta\in\f_{q^2}^*$,
\begin{equation}\label{3.1}
f(\beta X)=\beta X(1+a'X^{q(q-1)}+b'X^{2(q-1)}),
\end{equation}
where $a'=a\beta^{1-q}$ and $b'=b\beta^{2(q-1)}$. Also note that $a$ and $b$ satisfy \eqref{1.4} if and only if $a'$ and $b'$ do. Therefore, when proving Theorem~\ref{T3.1}, we may replace $a$ and $b$ with $a'$ and $b'$. We will consider three cases:

\subsection*{Case 1} $a$ is a square in $\f_{q^2}$.

\subsection*{Case 2} $a$ is not a square in $\f_{q^2}$ and $n$ is even.

\subsection*{Case 3} $a$ is not a square in $\f_{q^2}$ and $n$ is odd.

\section{Proof of Theorem~\ref{T3.1}, Case 1}

Assume that $a$ is a square in $\f_{q^2}$. Let $a=\gamma^{2i}$, where $\gamma$ is a primitive element of $\f_{q^2}$. Letting $\beta=\gamma^{-i}$ in \eqref{3.1} gives $a'=a\beta^{1-q}=\gamma^{i(q+1)}\in\f_q^*$. Therefore, in this case, we may assume that $a\in\f_q^*$.

\subsection*{Case 1.1} Assume that $b\notin \f_q$. Let $b_1=(b+b^q)/2\in\f_q$, $z=(b-b^q)/2\in\f_{q^2}\setminus\f_q$, and $k=z^2$. Then $k$ is a nonsquare in $\f_q^*$, and $z^q=-z$, $b=b_1+z$, $b^q=b_1-z$. The polynomials $A$ and $B$ in \eqref{2.5} and \eqref{2.6} become
\begin{equation}\label{3.2}
A(X)=(1+a+b_1-z)X^3-zX^2-kX+k((1-a+b_1)z-k),
\end{equation}
\begin{equation}\label{3.3}
B(X)=(1+a+b_1+z)X^3+zX^2-kX-k((1-a+b_1)z+k).
\end{equation}
The left side of \eqref{2.7} equals $z(C_3x^3+C_2(y)x^2+C_1(y)x+C_0(y))$, where
\begin{align}\label{3.4}
C_0(Y)&=k(-1+a^2+b_1-b_1^2+k)Y-ak^2,\\ \label{3.5}
C_1(Y)&=kY+(1+a+b_1)k,\\ \label{3.6}
C_2(Y)&=(1+a+b_1)Y+k,\\ \label{3.7}
C_3\hspace{1.5em}&=k-(1+a+b_1)^2\ne 0.
\end{align}
Hence \eqref{2.7} is equivalent to \eqref{2.8}. Let $E(Y)$ be given in \eqref{2.11}. We find that
\begin{equation}\label{3.8}
k^{-2}E(Y)=e_0+e_1Y+e_2Y^2+e_3Y^3+e_4Y^4,
\end{equation}
where
\begin{align}\label{3.9}
e_0=\,&k^2 (a^5- a^4 b_1- a^4+a^3 b_1^2- a^3 b_1- a^3 k+a^3+a^2 b_1^3+a^2 k+a^2\\
&- a b_1^4- a b_1^3- a b_1 k- a b_1+a k^2- a k- a+b_1^5- b_1^4- b_1^3 k\cr
&+b_1^3+b_1^2 k+b_1^2- b_1 k- b_1+1), \nonumber
\end{align}
\begin{equation} \label{3.10}
e_1= -k^2 (a^3+a^2 k+a k+b_1^3- b_1^2 k- b_1 k+k^2+1),
\end{equation}
\begin{equation} \label{3.11}
e_2= k C_3^2,
\end{equation}
\begin{equation} \label{3.12}
e_3= -k (-a^4+a^3- a^2 k- a b_1^3+a b_1 k- a k- a+b_1^3- b_1^2 k- b_1 k+k^2+1),
\end{equation}
\begin{equation} \label{3.13}
e_4= -(a+b_1+1)^2 (a^3+a^2 b_1+a^2- a b_1^2+a b_1+a k- a- b_1^3+b_1 k-1).
\end{equation}
Moreover,
\begin{equation}\label{3.14}
e_0e_3^2-e_1^2e_4=ak^4((1+a+b_1)^2-k)^3h_1
\end{equation}
and
\begin{equation}\label{3.15}
e_3^3-e_1e_4^2-e_2e_3e_4=k^2((1+a+b_1)^2-k)^3h_2,
\end{equation}
where $h_1$ and $h_2$ are polynomials in $a$, $b_1$ and $k$ which are given in Appendix \eqref{A1} and \eqref{A2}. By \eqref{2.17}, $h_1=h_2=0$. Treating $a$, $b_1$ and $k$ as variables, we find that
\begin{align}\label{3.16}
&\text{Res}(h_1,h_2;k)=-a^2(1+a+b_1)^{14}(1+a+a^2+ab_1)^3,\\ \label{3.17}
&\text{Res}(h_1,h_2;b_1)=a^3k^{14}(-1+a+a^2k)^3.
\end{align}
By \eqref{3.17},
\begin{equation}\label{3.18}
k=\frac{1-a}{a^2}.
\end{equation}
We then have
\begin{align*}
e_4\,&=\frac {-1}{a^2}(1+a+b_1)^2(1+a+a^2+ab_1)(a+b_1+ab_1+a^3-ab_1^2)\cr
&\kern17.5em\text{(by \eqref{3.13} and \eqref{3.18})}\cr
&=0\kern 20.4em\text{(by \eqref{3.16})}.
\end{align*}
Then by \eqref{2.16}, $e_3=0$ and $e_2$ is a square in $\f_q$. However, by \eqref{3.11}, $e_2$ is not a square in $\f_q$, which is a contradiction.

\subsection*{Case 1.2} Assume that $b\in\f_q$. Let $k\in\f_q^*$ be a nonsquare and let $z\in\f_{q^2}$ be such that $z^2=k$. Then $z^q=-z$. Now \eqref{2.5} and \eqref{2.6} become
\begin{align}\label{3.22}
A(X)\,&=(1+a+b)X^3-zX^2-kX+(1-a+b)kz,\\
\label{3.23}
B(X)\,&=(1+a+b)X^3+zX^2-kX-(1-a+b)kz,
\end{align}
and \eqref{2.7}, after multiplication by $z^{-1}$, becomes \eqref{2.8}, where
\begin{align}\label{3.24}
C_0(Y)\,&=-(1-a+b)kY,\\
\label{3.24a}
C_1\kern 1.7em &=k,\\
\label{3.25}
C_2(Y)\,&=Y,\\
\label{3.26}
C_3\kern 1.7em &=-(1+a+b)\ne 0.
\end{align}
Moreover,
\begin{equation}\label{3.27}
k^{-2}E(Y)=k^2(1+a+b)+kY^2+(1-a+b)Y^4,
\end{equation}
which is supposed to be a complete square. Therefore,
\begin{equation}\label{3.28}
k^2=k^2(1+a+b)(1-a+b),
\end{equation}
which gives $b=b^2-a^2$; this is the first condition in \eqref{1.4}. It remains to show that $(a^2-b^2)/a^2=-b/a^2$ is a square in $\f_q^*$, i.e., $-b$ is a square in $\f_q^*$. By \eqref{2.12}, the polynomial in \eqref{3.27} equals $D(Y)^2$ for some $D(Y)\in\f_q[Y]$. In particular, both $1+a+b$ and $1-a+b$ are squares in $\f_q$. By \eqref{3.28}, we may write $1+a+b=u^2$ and $1-a+b=u^{-2}$ for some $u\in\f_q^*$. Then
\[
-b=u^2-2+u^{-2}=(u-u^{-1})^2,
\]
which is a square in $\f_q^*$.

\section{Proof of Theorem~\ref{T3.1}, Case 2}

Assume that $a$ is not a square in $\f_{q^2}$ and $n$ is even. In this case,
$q\equiv 1\pmod 4$ and hence $\text{gcd}((q+1)/2,q-1)=1$. Let $a=\gamma^i$ and write $i=u(q+1)/2+v(q-1)$ for some $u,v\in\Bbb Z$. Then $f(\gamma^vX)=\gamma^vX(1+a'X^{q(q-1)}+b'X^{2(q-1)})$, where $a'=\gamma^{u(q+1)/2}$ and $b'=b\gamma^{2v(q-1)}$. Therefore we may assume that $a^2\in\f_q$ but $a\notin\f_q$. Hence $a^q=-a$.

\subsection*{Case 2.1} Assume that $b\notin\f_q$. Let $b_1=(b+b^q)/2\in\f_q$, $z=(b-b^q)/2\in\f_{q^2}\setminus\f_q$, and $k=z^2$. Then $k$ is a nonsquare in $\f_q^*$, and $z^q=-z$, $b=b_1+z$, $b^q=b_1-z$. Since both $a^2$ and $z^2$ are nonsquares in $\f_q^*$, $(a/z)^2$ is a square in $\f_q^*$, whence $a/z\in\f_q^*$. Write $a=a_1z$, where $a_1\in\f_q^*$. Now \eqref{2.5} and \eqref{2.6} become
\begin{align}\label{3.29}
A(X)\,&=(1+b_1-z-a_1z)X^3-zX^2-kX+k(-k+a_1k+z+b_1z),\\
\label{3.30}
B(X)\,&=(1+b_1+z+a_1z)X^3+zX^2-kX+k(-k+a_1k-z-b_1z),
\end{align}
and \eqref{2.7}, after multiplication by $z^{-1}$, becomes \eqref{2.8}, where
\begin{align}\label{3.31}
C_0(Y)\,&=k(-1+b_1-b_1^2+k-a_1^2k)Y+k^2a_1(1+b_1),\\
\label{3.32}
C_1(Y)\,&=k(1+a_1)Y+k(1+b_1),\\
\label{3.33}
C_2(Y)\,&=(1+b_1)Y+k(1+a_1),\\
\label{3.34}
C_3\kern 1.7em &=-(1+b_1)^2+(1+a_1)^2k.
\end{align}
We claim that $C_2(Y)\ne 0$ and $C_3\ne 0$, i.e., $(a_1,b_1)\ne (-1,-1)$. Otherwise, $C_0(Y)$, $C_1(Y)$, $C_2(Y)$, $C_3$ are all 0, which is impossible since \eqref{2.8} has a unique solution $x\in\f_q$ for any given $y\in\f_q$. We have
\begin{equation}\label{3.35}
k^{-2}E(Y)=e_0+e_1Y+e_2Y^2+e_3Y^3+e_4Y^4,
\end{equation}
where
\begin{align}\label{3.36}
e_0=\,&-(b_1+1) k^2 (a_1^4 k^2+a_1^2 b_1^2 k+a_1^2 b_1 k- a_1 b_1^2 k- a_1 b_1 k\\
&+a_1 k^2- b_1^4- b_1^3+b_1^2 k+b_1 k- b_1-1), \nonumber
\end{align}
\begin{align}\label{3.37}
e_1=\,& (a_1+1) k^2 (a_1^4 k^2- a_1^3 k^2+a_1^2 b_1^2 k+a_1^2 b_1 k- a_1 b_1^2 k\\
&- a_1 b_1 k+a_1 k^2- b_1^3+b_1^2 k+b_1 k- k^2-1),\nonumber
\end{align}
\begin{equation}\label{3.38}
e_2= k C_3^2,
\end{equation}
\begin{align}\label{3.39}
e_3=\,& -k (a_1^5 k^2- a_1^4 k^2- a_1^3 b_1^2 k- a_1^3 b_1 k+a_1^3 k^2+a_1^2 k^2+a_1 b_1^4\\
&- a_1 b_1^3+a_1 b_1- a_1 k^2- a_1+b_1^3- b_1^2 k- b_1 k+k^2+1),\nonumber
\end{align}
\begin{equation}\label{3.40}
e_4= (b_1+1)^2 (a_1^2 b_1 k- a_1^2 k- a_1 k+b_1^3- b_1 k+1).
\end{equation}
Moreover,
\begin{equation}\label{3.41}
e_0e_3^2-e_1^2e_4=k^4a_1(1+b_1)((1+b_1)^2-k(1+a_1)^2)^3h_1,
\end{equation}
\begin{equation}\label{3.42}
e_3^3-e_1e_4^2-e_2e_3e_4=-k^2((1+b_1)^2-k(1+a_1)^2)^3h_2,
\end{equation}
where $h_1$ and $h_2$ are given in Appendix \eqref{A3} and \eqref{A4}.

Recall that $(a_1,b_1)\ne (-1,-1)$. We claim that $a_1\ne -1$ and $b_1\ne -1$. In fact, if $a_1=-1$, then $h_1=-(1+b_1)^6$, whence $b_1=-1$; if $b_1=-1$, then $h_2=-k^4(1+a_1)^9$, whence $a_1=-1$. Now it follows from \eqref{3.41} and \eqref{3.42} that $h_1=h_2=0$. We find that
\begin{align}\label{3.44}
\text{Res}(h_1,h_2;k)\,&=a_1^3(1+a_1)^{18}(1+b_1)^{17}(1-a_1+a_1b_1)(1+a_1^2-a_1b_1+a_1b_1^2)^3,\\
\label{3.45}
\text{Res}(h_1,h_2;b_1)\,&=k^{17}a_1^5(1+a_1)^{33}(-1+a_1^2k)(1+a_1-a_1k+a_1^2k+a_1^2k^2+a_1^3k^2)^3.
\end{align}
By \eqref{3.45},
\begin{equation}\label{3.45.0}
0=1+a_1-a_1k+a_1^2k+a_1^2k^2+a_1^3k^2=(1+a_1k)^2+a_1(1-a_1k)^2,
\end{equation}
so
\begin{equation}\label{3.46}
a_1=-\Bigl(\frac{1+a_1k}{1-a_1k}\Bigr)^2.
\end{equation}
Assume that in \eqref{3.44}, $1+a_1^2-a_1b_1+a_1b_1^2= 0$, i.e., $(1+a_1)^2+a_1(1+b_1)^2=0$, i.e.,
\begin{equation}\label{3.47}
a_1=-\Bigl(\frac{1+a_1}{1+b_1}\Bigr)^2.
\end{equation}
Then
\begin{equation}\label{3.48}
\frac{1+a_1k}{1-a_1k}=\pm \frac{1+a_1}{1+b_1}.
\end{equation}

First assume that the ``$+$'' sign holds in the above. Then
\begin{equation}\label{3.49}
k=\frac{a_1-b_1}{a_1(a_1+b_1-1)}.
\end{equation}
(Note that $a_1+b_1-1\ne0$ since otherwise we also have $a_1-b_1=0$ and hence $(a_1,b_1)=(-1,-1)$.) Using \eqref{3.49} in \eqref{3.40} gives
\[
e_4=\frac{(1+b_1)^2(-a_1+a_1b_1+b_1^2)(1+a_1^2-a_1b_1+a_1b_1^2)}{a_1(a_1+b_1-1)}=0.
\]
It follows, as we saw before (at the end of Section 4, Case 1.1), that $e_2$ is a square in $\f_q$, which contradicts \eqref{3.38}.

Therefore the ``$-$'' sign holds in \eqref{3.48}. Then
\begin{equation}\label{3.51}
k=\frac{a_1+b_1-1}{a_1(a_1-b_1)}.
\end{equation}
Using \eqref{3.51} in \eqref{A3} gives
\begin{equation}\label{3.52}
h_1=\frac T{a_1^3(a_1-b_1)^3},
\end{equation}
where $T$ is given in Appendix \eqref{A6}. Moreover,
\[
\text{Res}(T,1+a_1^2-a_1b_1+a_1b_1^2;a_1)=(1+b_1)^{18}.
\]
Thus $b_1=-1$, which is a contradiction.

Therefore, we have proved that in \eqref{3.44}, $1+a_1^2-a_1b_1+a_1b_1^2\ne 0$. Thus $1-a_1+a_1b_1=0$, i.e., $b_1=(a_1-1)/a_1$. Using this substitution in \eqref{A3} and \eqref{A4} gives
\begin{equation}\label{3.53}
h_1=\frac{(1+a_1)^5}{a_1^5}S_1,
\end{equation}
\begin{equation}\label{3.54}
h_2=-\frac{(1+a_1)^7}{a_1^7}S_2,
\end{equation}
where
\begin{equation}\label{3.55}
S_1= a_1^7 k^3- a_1^6 k^3+a_1^5 k^3+a_1^5 k^2+a_1^4 k^2+a_1^3 k^2- a_1^3 k+a_1^2 k+a_1 k- a_1-1,
\end{equation}
\begin{equation}\label{3.56}
S_2= a_1^9 k^4- a_1^8 k^4+a_1^7 k^4- a_1^4 k^2- a_1 k- a_1-1.
\end{equation}
Let $L=(1+a_1k)^2+a_1(1-a_1k)^2$, which is the polynomial in \eqref{3.45.0}. We find useful information by computing the following resultants:
\begin{align*}
\text{Res}(S_1,L;a_1)\,&=-k^8(-1+k+k^2+k^3)(1-k^2+k^3+k^5),\cr
\text{Res}(S_2,L;a_1)\,&=-k^{10}(-1-k+k^3)(-1+k+k^2+k^3)(-1-k-k^2+k^3).
\end{align*}
The factors in the above are irreducible polynomials in $k$ over $\f_3$. Hence we must have $-1+k+k^2+k^3=0$. Then $k\in\f_{3^3}$. Since $[\f_q:\f_3]$ is even, $k$ is a square in $\f_q$, which is a contradiction.

\subsection*{Case 2.2} Assume that $b\in\f_q$. Let $z=a$ and $k=a^2$. Now \eqref{2.5} and \eqref{2.6} become
\begin{align}\label{3.57}
A(X)\,&=(1-a+b)X^3-aX^2-kX+k(a+ab+k),\\
\label{3.58}
B(X)\,&=(1+a+b)X^3+aX^2-kX-k(a+ab-k),
\end{align}
and \eqref{2.7}, after multiplication by $a^{-1}$, becomes \eqref{2.8}, where
\begin{align}\label{3.60}
C_0(Y)&=k(-1+b-b^2-k)Y+k^2(1+b),\\ \label{3.61}
C_1(Y)&=kY+k(1+b),\\ \label{3.62}
C_2(Y)&=(1+b)Y+k,\\ \label{3.63}
C_3\hspace{1.5em}&=-(1+b)^2+k\ne 0.
\end{align}
Moreover,
\begin{equation}\label{3.64}
k^{-2}E(Y)=e_0+e_1Y+e_2Y^2+e_3Y^3+e_4Y^4,
\end{equation}
where
\begin{align}\label{3.65}
e_0\,&=-k^2(1+b)(-1-b-b^3-b^4+bk+b^2k+k^2),\\
\label{3.66}
e_1\,&=k^2(-1-b^3+bk+b^2k+k^2),\\
\label{3.67}
e_2\,&=kC_3^2,\\
\label{3.68}
e_3\,&=-k(-1+b-b^3+b^4-bk-b^2k+k^2),\\
\label{3.69}
e_4\,&=(1+b)^2(1+b^3-k+bk).
\end{align}
We have
\begin{equation}\label{3.70}
e_0e_3^2-e_1^2e_4=k^4(1+b)((1+b)^2+k)^2((1+b)^2-k)^3(b+b^2+k).
\end{equation}
In \eqref{3.70}, $1+b\ne 0$. (Otherwise, $e_4=0$, and as we saw before, this implies that $e_2$ is a square in $\f_2$, which is impossible.) Since $-1$ is a square in $\f_q$, we also have $(1+b)^2+k\ne 0$. Therefore by \eqref{3.70}, $b+b^2+k=0$, i.e., $b+b^2+a^2=0$; this is the first condition in \eqref{1.4}. Since $k=-b-b^2$, \eqref{3.69} gives $e_4=(1+b)^3$. Since $e_4$ is a square in $\f_q$, $1+b$ is a square in $\f_q^*$. Now we have
\[
1-\Bigl(\frac ba\Bigr)^{q+1}=1+\frac{b^2}{a^2}=\frac{a^2+b^2}{a^2}=\frac{-b}{-b(1+b)}=\frac 1{1+b},
\]
which is a square in $\f_q^*$.

\section{Proof of Theorem~\ref{T3.1}, Case 3}

Assume that $a$ is not a square in $\f_{q^2}$ and $n$ is odd. Let $z\in\f_{q^2}$ be such that $z^2=-1$. Then $o(1+z)=8$. Since $q^2-1\equiv 8\pmod{16}$, we may write $a=u(1+z)$, where $u$ is a square in $\f_{q^2}^*$. By the first paragraph of Section~4, we may further assume that $u\in\f_q^*$. Write $b=v+wz$, where $v,w\in\f_q$. We have $z^q=-z$, $a^q=u(1-z)$, and $b^q=v-wz$. Then \eqref{2.5} and \eqref{2.6} become
\begin{align}\label{3.74}
A(X)\,&=(1+u+v-uz-wz)X^3-zX^2+X+u-w-z(1-u+v),\\
\label{3.75}
B(X)\,&=(1+u+v+uz+wz)X^3+zX^2+X+u-w+z(1-u+v),
\end{align}
and \eqref{2.7}, after multiplication by $z^{-1}$, becomes \eqref{2.8}, where
\begin{align}\label{3.76}
C_0(Y)&=(1+u^2-v+v^2+w^2)Y+u(1+v-w),\\ \label{3.77}
C_1(Y)&=-(u+w)Y-(1+u+v),\\ \label{3.78}
C_2(Y)&=(1+u+v)Y-(u+w),\\ \label{3.79}
C_3\hspace{1.5em}&=-(1+u+v)^2-(u+w)^2.
\end{align}
We claim that $C_3\ne 0$ and $C_2(Y)\ne 0$. Otherwise, $1+u+v=u+w=0$, and hence $C_1(Y)=0$. Then \eqref{2.8} cannot have a unique solution for $x$, which is a contradiction.

We now compute $E(Y)$ in \eqref{2.11} with $k=-1$:
\begin{equation}\label{3.80}
E(Y)=e_0+e_1Y+e_2Y^2+e_3Y^3+e_4Y^4,
\end{equation}
where
\begin{align}\label{3.81}
e_0=\,& -(-1 + u - u^2 + u^3 + u^5 + v + u v - u^2 v - u^4 v - v^2 +
   u^2 v^2 - u^3 v^2 \\
   &- v^3 + u v^3 + u^2 v^3 + v^4 + u v^4 - v^5 + u^2 w - u^3 w + u v w + u^2 v w \cr
   &- u v^2 w + u v^3 w - u w^2 + u^2 w^2 - u^3 w^2 - v w^2 - u v w^2 + v^2 w^2 \cr
   &- v^3 w^2 + u w^3 + u v w^3 - u w^4), \nonumber
\end{align}
\begin{align}\label{3.82}
e_1=\,& -(u + w) (1 + u^4 + u^2 v + u^2 v^2 + v^3 + u^2 w - u^3 w - u v w - u v^2 w \\
   &- u w^2 - u^2 w^2 + v w^2 + v^2 w^2 - u w^3 + w^4),  \nonumber
\end{align}
\begin{equation}\label{3.83}
e_2=-C_3^2,
\end{equation}
\begin{align}\label{3.84}
e_3=\,& -(u + u^5 - u v - u^3 v - u^3 v^2 + u v^3 - u v^4 - w +
   u w - u^3 w \\
   &- u^4 w - v^3 w + u v^3 w - u^3 w^2 - u w^3 +
    u^2 w^3 - v w^3 + u v w^3 \cr
    &- v^2 w^3 + u w^4 - w^5),  \nonumber
\end{align}
\begin{equation}\label{3.85}
e_4=(1 + u + v)^2 (1 + u + u^3 - u v + u^2 v + u v^2 + v^3 + u w +
   u w^2 + v w^2).
\end{equation}
We have $e_4\ne 0$, in particular, $1+u+v\ne 0$. (Otherwise, as we saw before, $e_2$ is a square in $\f_q$, which is impossible.) We find that
\begin{equation}\label{3.86}
e_0e_3^2-e_1^2e_4=uC_3^3h_1,
\end{equation}
\begin{equation}\label{3.87}
e_3^3-e_1e_4^2-e_2e_3e_4=-C_3^3h_2,
\end{equation}
where $h_1$ and $h_2$ are given in Appendix \eqref{A7} and \eqref{A8}. Moreover,
\begin{align}\label{3.88}
\text{Res}(h_1,h_2;u)=\,&v^6w(1+v-w)^{42}K_1(v,w)^3K_2(v,w),\\
\label{3.89}
\text{Res}(h_1,h_2;v)=\,&-u^3(1+u)^2(1+u^2)(u+w)^{42}\\
&\cdot(-1-u+u^2+w+w^2)^2(u^2+w+w^2)L(u,w)^3,\cr
\label{3.90}
\text{Res}(h_1,h_2;w)=\,& -u^5(1+u)^2(1+u^2)v^6(1+u+v)^{42}M(u,v)^3,
\end{align}
where
\begin{equation}\label{3.91}
K_1= v^5- v^4 w+v^4+v^3 w^2- v^3- v^2 w^3+v^2 w^2- v^2- v w^2+v w- v+w-1,
\end{equation}
\begin{align}\label{3.92}
K_2=\,& v^7+v^6 w^2+v^6+v^5 w^2- v^5 w- v^4 w^3- v^4 w^2+v^4 w- v^4\\
&- v^3 w^4- v^3 w^3- v^3 w^2+v^2 w^5- v^2 w^4+v^2 w^3- v^2 w^2- v^2 w\cr
&- v w^6- v w^4+v w+v+w^8- w^7+w^6- w^3+w^2-1, \nonumber
\end{align}
\begin{equation} \label{3.93}
L= u^5+u^4 w+u^3 w^2+u^3 w+u^2 w^3+u^2 w^2+u^2 w- u^2- u w^2+u w- u- w,
\end{equation}
\begin{equation} \label{3.94}
M= u^3 v^2+u^2 v^3+u^2 v^2+u^2 v+u v^2- u v- u- v-1.
\end{equation}

\subsection*{Case 3.1} Assume that $v\ne0$.
We claim that $1+u\ne 0$. If, to the contrary, $u=-1$, then 
\begin{align}\label{3.95}
h_1\,&=(v^2+(w-1)^2)^2H_1,\\
\label{3.96}
h_2\,&=(v^2+(w-1)^2)H_2,
\end{align}
where
\begin{equation}\label{3.97}
H_1=v^3- v^2 w+v^2+v w^3- v w^2- v w+v-w^4+w^3+w-1,
\end{equation}
\begin{align}\label{3.98}
H_2=\,& v^5 w+v^5+v^4 w^3- v^4+v^3 w^3- v^3 w^2- v^3 w+v^3- v^2 w^5- v^2 w^4\\
&+v^2 w^3+v^2 w^2+v^2 w- v^2+w^7- w^6+w^4- w^3+w-1. \nonumber
\end{align}
Moreover,
\begin{align}\label{3.99}
\text{Res}(H_1,H_2;v)\,&=(1+w)^6(2+w)^{19},\\
\label{3.100}
\text{Res}(H_1,H_2;w)\,&=v^{19}(1+v)^6.
\end{align}
By \eqref{3.99}, $w=\pm1$. If $w=1$, then $h_1=v^7\ne 0$, which is a contradiction. If $w=-1$, then $h_1=(1+v)(1+v^2)^2(2+v+v^2)$, whence $v=-1$. However, $e_4|_{(u,v,w)=(-1,-1,-1)}=0$, which is a contradiction. Therefore, we have proved that $1+u\ne 0$. It follows from \eqref{3.90} that
\begin{equation}\label{3.101}
M=0.
\end{equation}

In \eqref{3.89}, note that $-1-u+u^2+w+w^2=(1+u)^2+(1-w)^2\ne 0$. We further claim that $u+w\ne 0$. If, to the contrary, $w=-u$, then $h_1=-u(1+u+v)^7\ne 0$, which is a contradiction. Now \eqref{3.89} gives
\begin{equation}\label{3.102}
(u^2+w+w^2)L=0.
\end{equation}

In \eqref{3.88}, we claim that $w\ne 0$. Otherwise,
\begin{equation}\label{3.103}
\text{Res}(h_1,h_2;v)=-u^{50}(1+u)^2(1+u^2)(-1-u+u^2)^2(-1-u+u^4)^3\ne 0,
\end{equation}
which is a contradiction. We further claim that $1+v-w\ne 0$. Otherwise,
\begin{align}\label{3.104}
h_1\,&=(1+u+v)^6(-1+u^2-v),\\
\label{3.105}
h_2\,&=-(1+u+v)^6v(1+u-v+v^2).
\end{align}
However,
\[
\text{Res}(-1+u^2-v,1+u-v+v^2;v)=u(1+u)^3\ne 0,
\]
which is a contradiction. Now \eqref{3.88} gives
\begin{equation}\label{3.106}
K_1K_2=0.
\end{equation}

To recap, we have
\begin{equation}\label{3.107}
K_1K_2=0,\quad (u^2+w+w^2)L=0,\quad M=0.
\end{equation}

First assume that $u^2+w+w^2=0$. We find that
\begin{align}\label{3.108}
S:=\,&\text{Res}(u^2+w+w^2,M;u)\\
=\,& v^6 w^4- v^6 w^3+v^6 w^2- v^5 w^4+v^5 w^3- v^5 w^2+v^4 w^6+v^4 w^4+v^4 w^2\cr
&+v^3 w^4-v^3 w^3- v^3 w+v^2+v w^2+v w- v+w^2+w+1. \nonumber
\end{align}

If $K_1=0$, we compute the following resultants:
\begin{align}\label{3.109}
\text{Res}(S,K_1;v)\,&=w(1+w)(-1+w)(1+w^2-w^3+w^4)P_{21}(w),\\
\label{3.110}
\text{Res}(S,K_1;w)\,&=v^{13}(1+v)^2(-1+v+v^4)Q_{21}(v),
\end{align}
where
\begin{align}\label{3.111}
P_{21}(X)=\,& X^{21}- X^{19}- X^{18}- X^{17}+X^{16}+X^{14}- X^{13}+X^{11}\\
&- X^{10}- X^7+X^6- X^3+X^2-1, \nonumber
\end{align}
\begin{align}\label{3.112}
Q_{21}(X)=\,& X^{21}+X^{20}- X^{18}- X^{17}- X^{16}+X^{15}+X^{12}- X^{11}\\
&+X^{10}- X^9- X^8- X^6+X^5+X^4- X^3+X-1, \nonumber
\end{align}
and all factors in \eqref{3.109} and \eqref{3.110} are irreducible polynomials over $\f_3$. We claim that in \eqref{3.109}, $(1+w)(-1+w)\ne 0$. Otherwise, $w=\pm 1$ and $u^2=-w-w^2=0$ or $1$. Since $u\ne 0,-1$, we must have $u=1$ and $w=1$. However,
\[
h_1|_{(u,w)=(1,1)}=-v(1-v+v^4-v^5+v^6)\ne 0,
\]
which is a contradiction. Moreover, we claim that in \eqref{3.110}, $1+v\ne 0$. Otherwise, $M=u(1+u)^2\ne 0$. Therefore, \eqref{3.109} and \eqref{3.110} give that $P_{21}(w)=0$ and $Q_{21}(v)=0$. 
Since $u^2=-w-w^2$, we have $u\in\f_q\cap\f_{3^{2\cdot 21}}=\f_{3^{21}}$. It follows that $(-w-w^2)^{(3^{21}-1)/2}=1$ and hence
\begin{equation}\label{u^2}
u^2=-w-w^2=(-w-w^2)^{(3^{21}+1)/2}.
\end{equation}
Therefore $u=\pm u_0$, where
\begin{align}
u_0=(-w-w^2)^{(3^{21}+1)/4}=\,&w^{20}+w^{19}- w^{18}- w^{17}- w^{15}- w^{14}+w^{12}- w^{11}\\
&+w^{10}+w^9+w^8+w^7-w^6+w^5- w-1.\nonumber
\end{align}
We find that
\begin{equation}\label{res-u0}
\text{Res}(h_1|_{u=u_0},h_2|_{u=u_0},w)\quad \text{and}\quad \text{Res}(h_1|_{u=-u_0},h_2|_{u=-u_0},w)
\end{equation}
are polynomials over $\f_3$ in $v$ which are not divisible by $Q_{21}(v)$; see \eqref{app-res-u0} and \eqref{app-res-u0-}. Thus the resultants in \eqref{res-u0} are nonzero, which is a contradiction.

If $K_2=0$, we compute similar resultants:
\begin{align}\label{3.113}
\text{Res}(S,K_2;v)\,&=-w^3(-1+w)^2\Pi_1(w)P_{15}(w),\\
\label{3.114}
\text{Res}(S,K_2;w)\,&=-v^2(1+v)^3\Pi_2(v)Q_9(v)^2Q_{15}(v),
\end{align}
where $\Pi_1$ and $\Pi_2$ (given in Appendix \eqref{Pi1} and \eqref{Pi2}) are products of irreducible polynomials of even degree over $\f_3$, and
\begin{equation}\label{3.115}
P_{15}(X)=X^{15}- X^{13}- X^{11}+X^{10}- X^9- X^7+X^6- X^3- X^2+X+1,
\end{equation}
\begin{equation}\label{3.116}
Q_9(X)=X^9+X^8+X^7+X^6+X^4- X^3- X^2- X-1,
\end{equation}
\begin{equation}\label{3.117}
Q_{15}(X)=X^{15}+X^{14}- X^{13}- X^{11}+X^9- X^7- X^6+X^3-1,
\end{equation}
which are irreducible over $\f_3$. It follows that $P_{15}(w)=0$ and $Q_9(v)Q_{15}(v)=0$. 
Same as \eqref{u^2}, we have
\[
u^2=(-w-w^2)^{(3^{15}+1)/2}.
\]
Thus $u=\pm u_1$, where
\[
u_1=(-w-w^2)^{(3^{15}+1)/4}= -w^{14}+w^{12}- w^{11}+w^{10}+w^7+w^6+w^5- w^4- w^3- w^2+w-1.
\]
We find that
\begin{equation}\label{res-u1}
\text{Res}(h_1|_{u=u_1},h_2|_{u=u_1};w)\quad \text{and}\quad \text{Res}(h_1|_{u=-u_1},h_2|_{u=-u_1};w)
\end{equation}
are polynomials over $\f_3$ in $v$ which are divisible by neither $Q_9(v)$ nor $Q_{15}(v)$; see \eqref{app-res-u1} and \eqref{app-res-u1-}. Thus the resultants in \eqref{res-u1} are nonzero, which is a contradiction.

Now assume that $u^2+w+w^2\ne 0$. Then by \eqref{3.107}, $L=0$. We find that
\begin{equation}\label{3.118}
\text{Res}(L,M;u)=K_1Q,
\end{equation}
where
\begin{align}\label{3.119}
Q=\,& v^{10} w^2- v^9 w^3- v^9 w^2- v^9 w- v^8 w^4+v^8 w^3- v^8- v^7 w^5\\
&- v^7 w^3+v^7
   w^2+v^7 w+v^6 w^6+v^6 w^4+v^6 w^2- v^6 w- v^6- v^5 w^6\cr
   &+v^5 w^4- v^5 w^3+v^4 w^6-
   v^4 w^4- v^4 w^3- v^4 w- v^4+v^3 w^5\cr
   &+v^3 w^4+v^3 w^3- v^2 w^4- v^2 w^2+v^2- v
   w^3+w^2- w+1. \nonumber
\end{align}
If $K_1\ne 0$, then $K_2=0$ and $Q=0$. We compute the following resultants:
\begin{align}\label{3.120}
\text{Res}(Q,K_2;v)\,&=\Gamma_1(w)P_7(w)P_{21}(w),\\
\label{3.121}
\text{Res}(Q,K_2;w)\,&=\Gamma_2(v)Q_7(v)Q_{21}(v),
\end{align}
where $\Gamma_1$ and $\Gamma_2$ (given in Appendix \eqref{Gamma1} and \eqref{Gamma2}) are products of irreducible polynomials of even degree over $\f_3$ and
\begin{equation}\label{3.122}
P_7(X)=X^7- X^5-1,
\end{equation}
\begin{align}\label{3.123}
P_{21}(X)=\,&X^{21}+X^{20}+X^{19}+X^{17}+X^{16}- X^{15}- X^{13}- X^{12}\\
&+X^{11}- X^{10}- X^9- X^8- X^6- X^5- X^2-1, \nonumber
\end{align}
\begin{equation}\label{3.124}
Q_7(X)=X^7- X^6+X^3- X^2+X+1,
\end{equation}
\begin{align}\label{3.125}
Q_{21}(X)=\,&X^{21}+X^{20}+X^{19}+X^{18}+X^{16}- X^{15}+X^{14}- X^{13}+X^{11}\\
&- X^{10}- X^9- X^8+X^6- X^5- X^2+X-1, \nonumber
\end{align}
which are irreducible over $\f_3$. It follows that $P_7(w)P_{21}(w)=0$ and $Q_7(v)Q_{21}(v)=0$. 
We claim that either $P_7(w)=Q_7(v)=0$ or $P_{21}(w)=Q_{21}(v)=0$. In fact,
\begin{align*}
&\text{Res}(P_7(w),K_2;w)\cr
=\,&Q_7(v)(v^7+v^6+v^5- v^4+v^2- v-1)\cr
&\cdot (v^7- v^6+v^4- v^3- v^2-1)
   (v^{14}- v^8- v^6- v^2+1)\cr
   &\cdot (v^{14}+v^{13}+v^{12}- v^{11}- v^{10}-
   v^9- v^8- v^7+v^6+v^5+v^4- v^2- v-1),
\end{align*}
which is not divisible by $Q_{21}(v)$. Hence $P_7(w)=0$ implies $Q_7(v)=0$. Similarly,
\begin{align*}
&\text{Res}(Q_7(v),K_2;v)\cr
=\,&P_7(w) (w^7- w^5+1) (w^7+w^6+w^5- w^4+w^3+w^2+1)\cr
&\cdot (w^7-
   w^6+w^5+w^4+w^3- w^2-1)\cr
   &\cdot (w^{14}+w^{13}+w^{12}- w^{11}+w^{10}+w^8+w^7-
   w^3+w-1)\cr
   &\cdot (w^{14}+w^{13}- w^{12}- w^{11}+w^{10}- w^9- w^5+w^4-
   w^3+w^2+1),
\end{align*}
which is not divisible by $P_{21}(w)$. Hence $Q_7(v)=0$ implies $P_7(w)=0$. Therefore the claim is proved.

First assume that $P_7(w)=Q_7(v)=0$. From $Q_7(v)=0$, we have $v^{-2}=v^6+v^5+v^4+v^2+v-1$ and
\begin{equation}\label{v-2M}
v^{-2}M=u^3+u^2 (-v^6+v^5- v^2- v)+u (v^5- v^4+v)+v^5- v^4+v-1,
\end{equation}
which is a monic cubic in $u$. Using the relations $v^{-2}M=0$, $P_7(w)=0$ and $Q_7(v)=0$ to reduce $h_1$ and $h_2$, we get
\begin{equation}\label{redp7q7}
h_i=h_{i0}+h_{i1}u+h_{i2}u^2,\quad i=1,2,
\end{equation}
where $h_{ij}=h_{ij}(v,w)$ are given in \eqref{app-hij-1} -- \eqref{app-hij-6}. We have
\begin{equation}\label{A0A1}
h_{22}h_1-h_{12}h_2\equiv A_0+A_1u\pmod{(P_7(w),Q_7(v))},
\end{equation}
where $A_i=A_i(v,w)$ are given in \eqref{app-A0} and \eqref{app-A1}. The resultant $\text{Res}(K_2,A_1;v)$ is a polynomial in $w$ such that
\[
\text{Res}(K_2,A_1;v)\equiv w^6+w^2- w \not\equiv 0\pmod{P_7(w)},
\]
so $A_1\ne 0$. By \eqref{A0A1}, $u=-A_0/A_1$. We now compute $h_1|_{u=-A_0/A_1}$ from \eqref{redp7q7}; the result, after reduction modulo $P_7(w)$ and $Q_7(v)$, is
\begin{equation}\label{U/V}
h_1|_{u=-A_0/A_1}=\frac UV,
\end{equation}
where $U=U(v,w)$ and $V=V(v,w)$ are given in \eqref{app-U} and \eqref{app-V}. Thus $U=0$. However, $\text{Res}(K_2,U;v)$ is a polynomial in $w$ such that
\[
\text{Res}(K_2,U;v)\equiv w^6+w^5+w^2 \not\equiv 0\pmod{P_7(w)}.
\]
Hence $\text{Res}(K_2,U;v)\ne 0$, which is a contradiction.

Next, Assume that $P_{21}(w)=Q_{21}(v)=0$. We reach a contradiction by similar but lengthier computations. From $Q_{21}(v)=0$, we have
\[
v^{-2}= v^{20}- v^{19}- v^{18}- v^{17}+v^{16}+v^{15}- v^{11}+v^{10}+v^8+v^7- v^6+v^5- v^3-
   v
\]
and
\begin{align*}
v^{-2}M=\,& u^3+u^2 (v^{20}+v^{19}+v^{18}+v^{17}+v^{15}- v^{14}+v^{13}- v^{12}+v^{10}- v^9- v^8-
   v^7\cr
   &+v^5- v^4-1)+u (v^{20}- v^{16}+v^{15}+v^{14}-
   v^{13}+v^{12}+v^{11}+v^{10}+v^9\cr
   &+v^6+v^5+v^4+v^3- v)+v^{20}- v^{16}+v^{15}+v^{14}-
   v^{13}+v^{12}+v^{11}+v^{10}\cr
   &+v^9+v^6+v^5+v^4+v^3- v-1,
\end{align*}
which is a monic cubic in $u$. Reducing $h_1$ and $h_2$ using the relations $v^{-2}M=0$, $P_{21}(w)=0$ and $Q_{21}(v)=0$ gives
\begin{equation}\label{redp21q21}
h_i=h_{i0}+h_{i1}u+h_{i2}u^2,\quad i=1,2,
\end{equation}
where $h_{ij}=h_{ij}(v,w)$ are given in \eqref{app-Hij-1} -- \eqref{app-Hij-6}. Moreover,
\begin{equation}\label{A0A1-1}
h_{22}h_1-h_{12}h_2\equiv A_0+A_1u\pmod{(P_{21}(w),Q_{21}(v))},
\end{equation}
where $A_i=A_i(v,w)$ are given in \eqref{app-A0-1} and \eqref{app-A1-1}. The resultant $\text{Res}(K_2,A_1;v)$ is a polynomial in $w$ such that
\begin{align*}
\text{Res}(K_2,A_1;v)\equiv\,& -w^{20}+w^{19}+w^{18}+w^{15}+w^{14}+w^{13}+w^{12}+w^{11}\cr
&+w^{10}+w^7- w^6+w^5- w^4+w^3- w^2+1\cr
 \not\equiv\,& 0\pmod{P_{21}(w)}.
\end{align*}
Thus $A_1\ne 0$ and hence $u=-A_0/A_1$ by \eqref{A0A1-1}.  We compute $h_1|_{u=-A_0/A_1}$ from \eqref{redp21q21}; the result, after reduction modulo $P_{21}(w)$ and $Q_{21}(v)$, is
\begin{equation}\label{U/V-1}
h_1|_{u=-A_0/A_1}=\frac UV,
\end{equation}
where $U=U(v,w)$ and $V=V(v,w)$ are given in \eqref{app-U-1} and \eqref{app-V-1}. Thus $U=0$. However, $\text{Res}(K_2,U;v)$ is a polynomial in $w$ such that
\begin{align*}
\text{Res}(K_2,U;v)\,&\equiv w^{20}+w^{16}- w^{15}+w^{14}- w^{13}+w^{12}- w^{10}- w^9- w^6+w^3+w^2+w\cr
& \not\equiv 0\pmod{P_{21}(w)}.
\end{align*}
Hence $\text{Res}(K_2,U;v)\ne 0$, which is a contradiction.

Now assume that $K_1=0$. In this case we solve the system
\[
\begin{cases}
K_1(v,w)=0,\cr
L(u,w)=0
\end{cases}
\]
for $w$ in terms of $u$ and $v$. We find that
\begin{equation}\label{3.126}
\alpha K_1+\beta L=\lambda w-\eta,
\end{equation}
where
\begin{align}\label{3.127}
\alpha=\,& -u^2 (u^3 v^3+u^3 v^2 w- u^3 v^2- u^3 v+u^2 v^4+u^2 v^3 w+u^2 v^3- u^2 v^2 w\\
&- u^2 v^2- u^2 v w+u^2 v- u^2- u v^3- u v^2 w- u v^2+u v- v^2), \nonumber
\end{align}
\begin{align}\label{3.128}
\beta=\,& v^2 (u^4 v^2+u^3 v^3- u^3 v^2 w- u^3 v^2- u^3 v- u^2 v^3 w+u^2 v^2 w\\
&+u^2 v^2+u^2 v w+u^2 v- u^2- u v^3+u v^2 w+u v- v^2), \nonumber
\end{align}
\begin{align}\label{3.129}
\lambda=\,& u^7 v^4+u^6 v^5- u^6 v^4- u^6 v^2+u^5 v^6- u^5 v^4- u^5 v^3- u^5 v^2+u^5 v\\
&+u^4 v^7- u^4 v^6- u^4 v^5- u^4 v^3- u^4 v+u^4- u^3 v^5- u^3 v^4+u^3 v^2\cr
&- u^3 v- u^2 v^6- u^2 v^5+u^2 v^3- u^2 v^2+u v^5- u v^4- u v^3+v^4, \nonumber
\end{align}
\begin{align}\label{3.130}
\eta=\,& -u (u^8 v^4+u^7 v^5- u^7 v^4- u^7 v^3+u^6 v^4+u^6 v^3- u^6 v^2- u^5 v^5\\
&- u^5 v^4+u^5 v^3- u^4 v^8+u^4 v^4+u^4 v^2- u^4 v- u^3 v^9+u^3 v^8\cr
&+u^3 v^7- u^3 v^6- u^3+u^2 v^8- u^2 v^7- u^2 v^6+u^2 v^5+u^2 v^4\cr
&+u^2 v^2+u^2 v+u v^7+u v^6+u v^3- u v^2+v^4). \nonumber
\end{align}
If $\lambda\ne 0$, we have $w=\eta/\lambda$, and more importantly, by \eqref{3.85},
\begin{equation}\label{3.131}
e_4|_{w=\eta/\lambda}=\lambda^{-2}(1+u+v)^2MN,
\end{equation}
where $N$ is a polynomial in $u$ and $v$ which is given in Appendix \eqref{A9}. Therefore, $e_4=0$, which is a contradiction. Hence we have $\lambda=\eta=0$. We find that
\begin{align}\label{3.132}
\text{Res}(M,\lambda;u)\,&=v^6(1+v)^4P_{11}(v)Q_{11}(v),\\
\label{3.133}
\text{Res}(M,\lambda;v)\,&=u^6(1+u)^4P_{11}(u)Q_{11}(u),
\end{align}
where
\begin{equation}\label{3.134}
P_{11}(X)=X^{11}- X^{10}+X^8+X^7+X^5+X^4- X^3+X^2+X-1,
\end{equation}
\begin{equation}\label{3.135}
Q_{11}(X)=X^{11}- X^{10}+X^9+X^8+X^4- X^2+X+1
\end{equation}
are irreducible over $\f_3$. We already showed that $1+u\ne0$ and $1+v\ne 0$. Therefore $P_{11}(v)Q_{11}(v)=0$ and $P_{11}(u)Q_{11}(u)=0$. 
We claim that either $P_{11}(u)=Q_{11}(v)=0$ or $Q_{11}(u)=P_{11}(v)=0$.  In fact,
\begin{align*}
\text{Res}(P_{11}(u),M;u)=\,&-Q_{11}(v) (v^{11}- v^9- v^8- v^7- v^6+v^5+v^2-1)\cr
&\cdot (v^{11}- v^{10}+v^9- v^8+v^6+v^5-
   v^4+v^3+v^2- v-1),
\end{align*}
which is not divisible by $P_{11}(v)$. Hence $P_{11}(u)=0$ implies $Q_{11}(v)=0$. Similarly,
\begin{align*}
\text{Res}(Q_{11}(u),M;u)=\,&-P_{11}(v) (v^{11}+v^{10}- v^9+v^8+v^7+v^6+v^5+v^3-1)\cr
&\cdot (v^{11}- v^{10}- v^8+v^7-
   v^5+v^2+v+1) ,
\end{align*}
which is not divisible by $Q_{11}(v)$. Hence $Q_{11}(u)=0$ implies $P_{11}(v)=0$. Therefore the claim is proved.

First assume that $P_{11}(u)=Q_{11}(v)=0$. Solving the equation $Q_{11}(v)=0$ in $\f_3(u)$ by exhaustive search gives $v=v_0^{3^i}$, $0\le i\le 10$, where
\begin{equation}\label{v0}
v_0=-1+u^3+u^5-u^7+u^9-u^{10}.
\end{equation}
Thus $M|_{v=v_0^{3^i}}$ can be computed in $\f_3(u)$. We find that $M|_{v=v_0^{3^i}}=0$ only when $i=0$. Hence $v=v_0$. We compute $h_i|_{v=v_0}$ ($i=1,2$) and reduce the results modulo $P_{11}(u)$; the outcomes are
\begin{equation}\label{hibar}
h_i|_{v=v_0}\equiv\bar h_i\pmod{P_{11}(u)},\quad i=1,2,
\end{equation}
where $\bar h_1$ and $\bar h_2$ are polynomials in $u$ and $w$ given in \eqref{app-h1-v0} and \eqref{app-h2-v0}. We further compute
\begin{equation}\label{Ri}
R_i=\text{Res}(\bar h_i,L;u),\quad i=1,2,
\end{equation}
as polynomials in $w$, which are given in \eqref{app-res-h1-L} and \eqref{app-res-h2-L}. It follows that
\[
\text{gcd}(R_1,R_2)=(1+w)^2(1-w+w^3-w^7+w^8+w^{11}).
\]
We have $1+w\ne 0$ since $L|_{w=-1}=1+u^2-u^4+u^5\ne 0$.
Hence
\[
1-w+w^3-w^7+w^8+w^{11}=0.
\]
Solving the above equation (by exhaustive search) in $\f_3(u)$ gives $w=w_0^{3^i}$, $0\le i\le 10$, where
\[
w_0=1+u^3-u^4+u^6+u^9-u^{10}.
\]
Moreover, $L|_{w=w_0^{3^i}}=0$ only if $i=0$. Hence we must have $w=w_0$. Now we compute $e_4|_{v=v_0,w=w_0}$ as a polynomial in $u$, which turns out to be $\equiv 0\pmod{P_{11}(u)}$. Thus $e_4=0$, which is a contradiction.

Next assume that $Q_{11}(u)=P_{11}(v)=0$. The computation procedure is identical to the above. Since $M$ is symmetric in $u$ and $v$, the only solution $u_0$ of $Q_{11}(u)=0$ in $\f_3(v)$ satisfying $M|_{u=u_0}=0$ is the expression in \eqref{v0} with $u$ replaced by $v$, i.e.,
\begin{equation}\label{u0}
u_0=-1+v^3+v^5-v^7+v^9-v^{10}.
\end{equation}
We compute
\begin{equation}\label{hibar1}
h_i|_{u=u_0}\equiv\bar h_i\pmod{P_{11}(v)},\quad i=1,2,
\end{equation}
where $\bar h_1$ and $\bar h_2$ are polynomials in $v$ and $w$ given in \eqref{app-h1-u0} and \eqref{app-h2-u0}. Let $\bar L=\bar L(v,w)$ be the reduction of $L|_{u=u_0}$ modulo $P_{11}(v)$. Then
\begin{equation}\label{Ri-1}
R_i=\text{Res}(\bar h_i,\bar L;v),\quad i=1,2,
\end{equation}
are polynomials in $w$, which are given in \eqref{app-res-h1-Lbar} and \eqref{app-res-h2-Lbar}. Moreover,
\[
\text{gcd}(R_1,R_2)=w(1-w+w^2+w^3+w^4-w^5+w^7-w^8+w^9+w^{10}+w^{11}).
\]
Recall that $w\ne 0$; see \eqref{3.103}. Hence we have
\[
1-w+w^2+w^3+w^4-w^5+w^7-w^8+w^9+w^{10}+w^{11}=0.
\]
Solving the above equation (by exhaustive search) in $\f_3(v)$ gives $w=w_0^{3^i}$, $0\le i\le 10$, where
\[
w_0=-v^3+v^4-v^6-v^9+v^{10}.
\]
Moreover, $L|_{u=u_0,w=w_0^{3^i}}=0$ only if when $i=0$. Hence we must have $w=w_0$. We find that
\[
e_4|_{u=u_0,w=w_0}\equiv 0\pmod{P_{11}(v)}.
\]
Thus $e_4=0$, which is a contradiction.

\subsection*{Case 3.2} Assume that $v=0$. In this case we find that
\begin{equation}\label{3.136}
h_1=((1+u)^2+(1-w)^2)^2((u+w)^2+(1-w)^2)(u^2+w+w^2).
\end{equation}
In the above, we claim that $(1+u)^2+(1-w)^2\ne 0$ and $(u+w)^2+(1-w)^2\ne 0$. Otherwise, $w=1$ and $u=-1$. Then $1+u+v=0$, which is not possible. Therefore we have $u^2+w+w^2=0$; this is the first condition in \eqref{1.4}. It remains to show that
\[
1-\Bigl(\frac ba\Bigr)^{q+1}=1+\frac{w^2}{u^2}=\frac{-w}{u^2}
\]
is a square in $\f_q^*$, i.e., $-w$ is a square in $\f_q^*$. We find that
\[
e_4|_{v=0}=(1+u)^2(1+u+u^3+uw+uw^2)=(1+u)^3.
\]
Hence $1+u$ is a square in $\f_q^*$. Since
\[
(u+w)^2=u^2+w^2-uw=-w-uw=-w(1+u),
\]
we conclude that $-w$ is a square in $\f_q^*$.

Now the proof of Theorem~\ref{T3.1} is complete.

\section{Final Remarks}

The sufficiency of the condition \eqref{1.4} is actually implied by the proof in this paper. To see this, one only has to take a small portion of the proof and reverse the arguments there; we leave this task for interested readers.

The method of our proof is likely to work for a few more small characteristics. However, to prove that the conditions in Theorem~\ref{T1.1} (iii) are necessary for an arbitrary characteristic $p>3$, additional techniques might be needed.

\section*{Appendix}

In \eqref{3.14} and \eqref{3.15},
\begin{align}\label{A1}\tag{A1}
h_1=\,& a^6- a^5 b_1+a^4 b_1^2+a^4 b_1+a^4 k- a^4+a^3 b_1^3+a^3 b_1^2- a^3 b_1 k+a^3 b_1\\
&+a^3- a^2 b_1^4- a^2 b_1- a^2 k^2- a^2 k+a b_1^5+a b_1^4+a b_1^3 k- a b_1^3+a b_1^2\cr
&+a b_1 k^2+a b_1 k+a b_1- a k^2- a k- a+b_1^5- b_1^4 k+b_1^4- b_1^2 k^2\cr
&- b_1^2 k+b_1^2- b_1 k^2+b_1- k^3- k, \nonumber
\end{align}
\begin{align}\label{A2}\tag{A2}
h_2=\,& a^7- a^6 b_1+a^6 k+a^5 b_1+a^5- a^4 b_1^3- a^4 b_1^2- a^4 b_1 k+a^4 b_1- a^4 k\\
&+a^4+a^3 b_1^4- a^3 b_1^3 k+a^3 b_1^3- a^3 b_1^2 k+a^3 b_1 k+a^3 b_1- a^3 k^2+a^3 k\cr
&+a^3+a^2 b_1^4+a^2 b_1^3+a^2 b_1+a^2 k^2+a^2+a b_1^6- a b_1^5- a b_1^4 k+a b_1^4\cr
&- a b_1^3 k+a b_1^3- a b_1^2- a b_1 k^2- a b_1 k+a b_1- a k^2- a k- b_1^7\cr
&+b_1^6 k+b_1^6- b_1^5 k+b_1^4 k+b_1^4- b_1^3 k^2+b_1^3 k- b_1^3+b_1^2 k^2- b_1^2 k\cr
&- b_1 k^2+b_1 k- b_1- k^4+1. \nonumber
\end{align}
In \eqref{3.41} and \eqref{3.42},
\begin{align}\label{A3}\tag{A3}
h_1=\,& a_1^7 k^3+a_1^6 k^3- a_1^5 b_1 k^2- a_1^5 k^2- a_1^4 b_1^2 k^2+a_1^4 b_1 k^2- a_1^4 k^3- a_1^4 k^2\\
&+a_1^3 b_1^3 k+a_1^3 b_1^2 k^2- a_1^3 k^3- a_1^3 k^2+a_1^3 k+a_1^2 b_1^4 k- a_1^2 b_1 k^2+a_1^2 b_1 k\cr
&- a_1^2 k^2+a_1 b_1^6- a_1 b_1^5- a_1 b_1^4 k+a_1 b_1^4+a_1 b_1^3- a_1 b_1^2 k^2- a_1 b_1^2\cr
&+a_1 b_1 k^2- a_1 b_1 k+a_1 b_1+a_1 k^3- a_1 k^2- b_1^5+b_1^4 k+b_1^4+b_1^3 k\cr
&- b_1^3+b_1^2 k^2- b_1^2+b_1 k+b_1+k^3- k^2+k-1, \nonumber
\end{align}
\begin{align}\label{A4}\tag{A4}
h_2=\,& -a_1^9 k^4+a_1^5 b_1^3 k^2+a_1^5 b_1^2 k^2- a_1^5 b_1 k^2- a_1^5 k^2- a_1^4 b_1^2 k^2+a_1^4 b_1 k^2- a_1^4 k^2\\
&- a_1^3 b_1^6 k- a_1^3 b_1^5 k+a_1^3 b_1^4 k- a_1^3 b_1^3 k^2+a_1^3 b_1^2 k^2- a_1^3 b_1^2 k- a_1^3 b_1 k^2\cr
&+a_1^3 b_1 k+a_1^3 k- a_1^2 b_1^5 k+a_1^2 b_1^4 k+a_1^2 b_1^3 k^2- a_1^2 b_1^3 k+a_1^2 b_1^2 k^2- a_1^2 b_1^2 k\cr
&- a_1^2 b_1 k^2+a_1^2 b_1 k- a_1^2 k^2- a_1^2 k+a_1 b_1^7- a_1 b_1^5 k+a_1 b_1^4 k- a_1 b_1^4\cr
&- a_1 b_1^3 k- a_1 b_1^2 k^2- a_1 b_1^2 k+a_1 b_1 k^2+a_1 b_1 k+a_1 b_1- a_1 k^2- a_1 k\cr
&- b_1^7+b_1^6 k+b_1^6- b_1^5 k+b_1^4 k+b_1^4- b_1^3 k^2+b_1^3 k- b_1^3+b_1^2 k^2- b_1^2 k\cr
&- b_1 k^2+b_1 k- b_1- k^4+1. \nonumber
\end{align}
In \eqref{3.52},
\begin{align}\label{A6}\tag{A5}
T=\,& a_1^{10}- a_1^9 b_1+a_1^8 b_1^3+a_1^8 b_1^2-
   a_1^8 b_1- a_1^8+a_1^7 b_1^6- a_1^7
   b_1^5+a_1^7 b_1^4+a_1^7 b_1^3\\
   &+a_1^7 b_1^2-
   a_1^6 b_1^3+a_1^6 b_1+a_1^5 b_1^4- a_1^5
   b_1^3- a_1^5 b_1^2- a_1^5 b_1+a_1^5-
   a_1^4 b_1^9\cr
   &+a_1^4 b_1^8- a_1^4 b_1^6+a_1^4
   b_1^3+a_1^4 b_1+a_1^4+a_1^3 b_1^8+a_1^3
   b_1^7+a_1^3 b_1^6- a_1^3 b_1^5\cr
   &- a_1^3
   b_1^4- a_1^3 b_1^3+a_1^3 b_1^2+a_1^2
   b_1^7+a_1^2 b_1^3+a_1^2 b_1^2+a_1^2 b_1-
   a_1^2- a_1 b_1^5\cr
   &- a_1 b_1^4+a_1
   b_1^3+a_1 b_1^2+a_1 b_1- a_1+b_1^3-1. \nonumber
\end{align}
In \eqref{3.86} and \eqref{3.87},
\begin{align}\label{A7}\tag{A6}
h_1=\,& u^8+u^7 v- u^7 w+u^7+u^6 v w- u^6 w^2+u^5 v^3- u^5 v^2 w- u^5 v w^2\\
&- u^5 v w- u^5 v+u^5 w^2+u^5 w- u^4 v^3 w+u^4 v^3+u^4 v^2 w^2+u^4 v^2 w\cr
&- u^4 v^2- u^4 v w^2- u^4 v w+u^4 v+u^4 w^3- u^4 w^2- u^3 v^5- u^3 v^3\cr
&+u^3 v^2 w^2- u^3 v^2 w- u^3 v^2+u^3 v w^4+u^3 v w- u^3 w^4- u^3 w^3\cr
&- u^3 w^2- u^3 w- u^3+u^2 v^6- u^2 v^5 w- u^2 v^5- u^2 v^4 w^2- u^2 v^4 w\cr
&- u^2 v^4+u^2 v^3 w^3- u^2 v^3 w^2- u^2 v^3 w- u^2 v^3+u^2 v^2 w^3- u^2 v^2 w\cr
&- u^2 v^2- u^2 v w^2- u^2 v w- u^2 v+u^2 w^6- u^2 w^5- u^2 w^4- u^2 w^2\cr
&- u^2 w+u^2- u v^7- u v^6 w+u v^5 w^2+u v^5 w+u v^4 w^2+u v^4 w+u v^4\cr
&- u v^3 w^4- u v^3 w^3+u v^3 w^2+u v^3 w- u v^2 w^3+u v^2 w^2+u v^2 w\cr
&+u v w^5+u v w^4+u v w^3+u v w^2+u v w- u v- u w^7- u w^6+u w^5\cr
&+u w^3+u w^2- u w+v^6 w+v^5 w^3- v^5 w^2- v^4 w^4- v^4 w^3- v^4 w^2\cr
&- v^3 w^5+v^3 w^3- v^3 w+v^2 w^6- v^2 w^5- v^2 w^4+v^2 w^3- v^2 w^2\cr
&+v w^7+v w^6+v w^5- v w^3- v w^2- w^8+w^7+w^5- w^4+w^3+w, \nonumber
\end{align}
\begin{align}\label{A8}\tag{A7}
h_2=\,& u^7 v+u^7 w- u^7+u^6 v^3- u^6 v^2- u^6 v w+u^6 v+u^6 w^3- u^6 w\\
&+u^5 v^3- u^5 v^2 w+u^5 v w^2- u^5 v w- u^5 w^3+u^5- u^4 v^2 w^2\cr
&- u^4 v^2 w- u^4 v w^3- u^4 v w^2+u^4 v w+u^4 w^3- u^4 w- u^3 v^6\cr
&- u^3 v^5- u^3 v^3 w^3- u^3 v^3- u^3 v^2 w^3- u^3 v^2 w^2- u^3 v^2\cr
&- u^3 v w^3+u^3 v w^2+u^3 w^5+u^3 w^4- u^3 w^3- u^3 w^2+u^2 v^6\cr
&- u^2 v^5 w+u^2 v^5+u^2 v^4 w^2- u^2 v^4- u^2 v^3 w^3+u^2 v^3 w^2+u^2 v^3 w\cr
&- u^2 v^2 w^3- u^2 v^2 w+u^2 v^2- u^2 v w^4+u^2 v w^3+u^2 v w^2- u^2 v\cr
&- u^2 w^5- u^2 w^4+u^2 w^3+u^2 w^2+u^2 w- u^2- u v^7- u v^6 w\cr
&- u v^5 w^2+u v^5 w- u v^4 w^3+u v^4 w^2- u v^4 w+u v^4- u v^3 w^3\cr
&- u v^3 w^2- u v^3 w+u v^2 w^4- u v^2 w^2+u v^2 w+u v w^5- u v w^4\cr
&- u v w^3+u v w^2- u v w- u v+u w^5+u w^4- u w^3- u w^2\cr
&+v^7 w+v^6 w^3- v^6 w- v^5 w^3+v^4 w^3- v^4 w+v^3 w^5+v^3 w^3\cr
&+v^3 w- v^2 w^5- v^2 w^3+v w^5+v w^3+v w+w^9- w. \nonumber
\end{align}
In \eqref{res-u0},
\begin{align}\label{app-res-u0}\tag{A8}
&\text{Res}(h_1|_{u=u_0},h_2|_{u=u_0},w)\equiv\\
&v^{18}- v^{17}- v^{16}- v^{15}- v^{14}- v^{13}- v^{12}- v^{11}- v^{10}+v^9-
   v^8+v^6+v^3-1\cr
&\pmod{Q_{21}(v)}, \nonumber
\end{align}
\begin{align}\label{app-res-u0-}\tag{A9}
&\text{Res}(h_1|_{u=-u_0},h_2|_{u=-u_0},w)\equiv\\
&v^{20}- v^{18}+v^{17}- v^{15}+v^{14}+v^{12}+v^{10}+v^9- v^6- v^5+v^4- v^2+v \cr
&\pmod{Q_{21}(v)}. \nonumber
\end{align}
In \eqref{3.113} and \eqref{3.114},
\begin{align}\label{Pi1}\tag{A10}
&\Pi_1(w)=\\
& (w^8+w^7+w^5- w^4- w^3- w^2+w+1)^2 \cr
&\cdot(w^{16}+w^{15}- w^{13}+w^{12}+w^8+w^5- w^4+w^2- w-1)\cr
&\cdot (w^{18}+w^{16}- w^{15}+w^{14}- w^{13}- w^{12}- w^{11}- w^7- w^6- w^5- w^2- w+1), \nonumber
\end{align}
\begin{equation}\label{Pi2}\tag{A11}
\Pi_2(v)=(v^8+v^6+v^4- v^2-1)^2 (v^{16}- v^{15}- v^{13}+v^{11}+v^9+v^8- v^7+v^6+v^5+1).
\end{equation}
In \eqref{res-u1},
\begin{align}\label{app-res-u1}\tag{A12}
&\text{Res}(h_1|_{u=u_1},h_2|_{u=u_1},w)=\\
&v^{28} (v^3+v^2+v-1) (v^4+v^3- v^2- v-1) (v^5- v^4-
   v-1)\cr
&\cdot (v^{10}+v^9+v^8+v^6+v^5- v^4- v^2-1)\cr
&\cdot   (v^{12}+v^{11}+v^{10}- v^9+v^8- v^5- v^4+v^3+v+1)\cr
&\cdot (v^{46}-
   v^{45}- v^{44}- v^{43}- v^{42}- v^{41}+v^{40}- v^{39}- v^{38}- v^{36}-
   v^{35}+v^{34}+v^{33}- v^{32}\cr
   &\kern1em +v^{30}- v^{29}- v^{28}+v^{27}- v^{26}+v^{25}+v^{24}-
   v^{23}- v^{22}+v^{21}- v^{20}+v^{19}- v^{18}\cr
   &\kern1em -v^{15}+v^{14}+v^{12}+v^{11}+v^{10}+v^9- v^8+v^7+v^6- v^5+v^2+1)\cr
   &\cdot
   (v^{88}+v^{86}- v^{84}- v^{82}- v^{78}+v^{77}+v^{75}+v^{73}+v^{72}- v^{71}-
   v^{70}- v^{69}- v^{68}+v^{67}\cr
   &\kern1em  +v^{66}+v^{65}- v^{62}+v^{60}+v^{58}- v^{56}-
   v^{54}- v^{53}+v^{52}+v^{51}- v^{50}+v^{48}+v^{46}\cr
   &\kern1em  - v^{44}- v^{43}-
   v^{42}+v^{41}- v^{38}- v^{36}+v^{35}- v^{31}+v^{29}+v^{28}+v^{27}+v^{26}+v^{25}\cr
   &\kern1em  -v^{23}- v^{22}+v^{21}+v^{20}- v^{19}- v^{17}+v^{16}+v^{15}- v^{14}+v^{13}+v^{12}-
   v^{10}- v^8\cr
   &\kern1em  - v^6- v^5- v^4+v^3+1) , \nonumber
\end{align}
\begin{align}\label{app-res-u1-}\tag{A13}
&\text{Res}(h_1|_{u=-u_1},h_2|_{u=-u_1},w)=\\
&v^{31} (v^2+1)^4 (v^4+v^3+v^2+1) (v^{10}+v^8- v^7+v^6-
   v^5+v^4+v^3-1)\cr
   &\cdot (v^{18}- v^{15}- v^{14}- v^{12}+v^{10}+v^5- v^2-
   v+1)\cr
   &\cdot (v^{125}+v^{121}+v^{120}+v^{116}+v^{115}+v^{113}-
   v^{111}+v^{110}+v^{108}+v^{107}+v^{105}+v^{103}\cr
   &\kern1em  - v^{99}- v^{98}- v^{95}+v^{94}-
   v^{93}- v^{92}+v^{91}- v^{90}- v^{89}+v^{88}+v^{87}- v^{86}- v^{85}\cr
   &\kern1em  -v^{79}+v^{78}- v^{72}+v^{70}- v^{69}+v^{68}- v^{67}- v^{66}+v^{64}+v^{62}-
   v^{60}+v^{59}- v^{58}\cr
   &\kern1em  - v^{57}+v^{56}- v^{55}- v^{54}+v^{53}+v^{52}-
   v^{51}+v^{50}- v^{49}- v^{46}- v^{45}- v^{44}+v^{42}\cr
   &\kern1em  +v^{41}+v^{40}+v^{39}+v^{38}-
   v^{36}- v^{35}+v^{34}- v^{33}- v^{32}+v^{31}- v^{30}- v^{29}-
   v^{28}\cr
   &\kern1em  +v^{26}+v^{24}- v^{21}+v^{20}- v^{19}- v^{17}-
   v^{16}+v^{15}+v^{14}+v^{13}+v^{12}+v^6- v^5\cr
   &\kern1em  +v^4+v^3+v^2+v+1)  . \nonumber
\end{align}
The factors in \eqref{app-res-u1} and \eqref{app-res-u1-} are irreducible over $\f_3$.
In \eqref{3.120} and \eqref{3.121},
\begin{align}\label{Gamma1}\tag{A14}
&\Gamma_1(w)=\\
&-(w^4+w^3+w^2+1) (w^4+w^3+w^2- w-1)\cr
& (w^4- w^3+w^2+1)(w^8- w^6- w^4+w^3+1)\cr
& (w^{18}+w^{17}+w^{16}+w^{15}- w^{14}- w^{12}- w^8- w^7- w^6- w^3- w^2+1)\cr
& (w^{22}- w^{19}- w^{17}- w^{15}- w^{14}- w^{13}+w^{11}- w^8+w^5- w^4- w^3+w^2+w+1), \nonumber
\end{align}
\begin{align}\label{Gamma2}\tag{A15}
&\Gamma_2(v)=\\
& -(v^4+v^3-1) (v^4+v^3- v^2- v-1)^2 (v^8- v^7- v^6- v^5+v^4- v^2- v+1)\cr
& (v^{18}- v^{16}- v^{14}+v^{13}- v^{11}+v^9- v^5- v^4+v^3-1)\cr
& (v^{22}- v^{21}+v^{20}- v^{19}- v^{18}+v^{17}+v^{15}+v^{14}- v^{13}- v^{12}- v^{10}+v^9+v^7\cr
&+v^6- v^5+v^3+v^2+v+1). \nonumber
\end{align}
In \eqref{redp7q7}
\begin{align}\label{app-hij-1}\tag{A16}
h_{10}=\,& -v^6 w^4- v^6 w^3- v^6 w^2- v^6 w- v^5 w^4+v^5 w^3+v^5 w- v^5+v^4 w^4+v^4
   w^3 \\
   &- v^4 w^2+v^4 w- v^3 w^5+v^3 w^2- v^3 w+v^3+v^2 w^6- v^2 w^5+v^2 w^4- v^2
   w^2\cr
   &- v^2+v w^6- v w^5- v w^4- v w^3- v w^2- v w- v- w^6- w^5+w^4+w^3\cr
   &+w^2+1,  \cr
\label{app-hij-2}\tag{A17}
h_{11}=\,& -v^6 w^4+v^6 w^3+v^6 w- v^6- v^5 w^4- v^5 w^2+v^5- v^4 w^4+v^4 w^2+v^4 w+v^4\\
&- v^3
   w^4+v^3 w^3+v^3 w^2+v^3 w- v^2 w^4- v^2 w+v^2+v w^5- v w^4- v w^3- w^6,  \cr
\label{app-hij-3}\tag{A18}
h_{12}=\,& -v^6
   w^4+v^6 w^3- v^6 w^2+v^6 w- v^6+v^5 w^4- v^5 w^2+v^5 w+v^5- v^4 w^3\\
   &+v^4 w^2+v^4
   w+v^3 w^3+v^3+v^2 w^4+v^2 w^2+v^2 w+v^2+v w^4- v w^2+w^6\cr
   &- w^5+w^4+w,  \cr
\label{app-hij-4}\tag{A19}
h_{20}=\,& -v^6 w^2+v^6 w+v^6- v^5 w^5- v^5 w^4+v^5 w^2- v^5 w- v^5+v^4 w^5+v^4 w^4+v^4
   w^2\\
   &+v^4 w+v^4+v^3 w^5+v^3 w^3- v^3 w^2+v^3 w- v^2 w^5+v^2 w^2- v w^4- v w^3- v
   w\cr
   &+v- w^5+w^4- w^3 \cr
\label{app-hij-5}\tag{A20}
h_{21}=\,& v^6- v^5 w^5- v^5 w^4+v^5 w^3+v^5 w+v^4 w^5+v^4 w^4- v^4
   w^3+v^4 w^2+v^4 w- v^4\\
   &- v^3 w^2- v^3 w+v^2 w^4- v^2 w^3+v^2 w^2+v^2+v w^4- v
   w^3+w^5+w^4+w^3- w, \cr
\label{app-hij-6}\tag{A21}
h_{22}=\,& v^6 w^5+v^6 w^4+v^6 w^2- v^6 w- v^5 w^5- v^5 w^4- v^5 w^3-
   v^5 w^2- v^5 w+v^5+v^4 w^3\\
   &- v^4 w^2+v^4 w- v^4- v^3 w^3- v^3 w^2- v^3 w+v^2
   w^5+v^2 w^4+v^2 w^3- v^2 w^2+v^2\cr
   &+v w^5- v w^3+v w^2- v w- w^5- w^4- w^3- w^2-
   w-1. \nonumber
\end{align}
In \eqref{A0A1},
\begin{align}\label{app-A0}\tag{A22}
A_0=\,& v^6 w^6- v^6 w^5- v^6 w^2- v^6 w- v^6+v^5 w^6+v^5 w^4+v^5 w^3+v^5 w^2+v^5+v^4 w^6\\
&- v^4 w^5+v^4 w^4- v^4 w^3+v^4 w- v^4- v^3 w^5+v^3 w^4- v^3 w^3+v^3 w^2+v^3 w+v^3\cr
&- v^2 w^5- v^2 w^3+v^2 w^2- v^2- v w^6+v w^5+v w^4- v w^2+v+w^5- w^3- w, \cr
\label{app-A1}\tag{A23}
A_1=\,& -v^6 w^5+v^6 w^2+v^6- v^5 w^6+v^5 w^4- v^5 w^3- v^5 w^2+v^5 w- v^4 w^6- v^4 w^5\\
&- v^4 w^4- v^4 w^3+v^3 w^6- v^3 w^5- v^3 w^3+v^3 w^2+v^3 w- v^2 w^6+v^2 w^5+v^2 w^4\cr
&+v^2 w^2- v^2+v w^6+v w^5- v w^4- v w^3- v w^2- v w+v+w^5+w+1. \nonumber
\end{align}
In \eqref{U/V},
\begin{align}\label{app-U}\tag{A24}
U=\,& v^6 w^5+v^6 w^4+v^6 w^3- v^6 w^2+v^6 w+v^6+v^5 w^6- v^5 w^5- v^5 w^4+v^5- v^4 w^6\\
&+v^4 w^5+v^4 w^4- v^4 w^2- v^4 w+v^3 w^6- v^3 w^2+v^3 w- v^3- v^2 w^6- v^2 w^5\cr
&- v^2 w^4- v^2- v w^5- v w^2- w^6+w^5+w^4+w, \cr
\label{app-V}\tag{A25}
V=\,& v^6 w^6+v^6 w^4+v^6 w^3+v^6 w+v^5 w^5+v^5 w^4- v^5 w^3- v^5 w^2+v^5 w+v^5+v^4 w^6\\
&- v^4 w^5+v^4 w^4- v^4+v^3 w^6- v^3 w^5+v^3 w^4+v^3 w^3- v^3 w^2+v^3 w- v^2 w^6\cr
&+v^2 w^3+v^2 w^2+v^2- v w^4- v w^3+v w^2- v w+v+w^6- w^5+w^4- w^3\cr
&+w-1. \nonumber
\end{align}
In \eqref{redp21q21},
\begin{align}\label{app-Hij-1}\tag{A26}
h_{10}=\,& -w^4 v^{20}- w^2 v^{20}+v^{20}+w^4 v^{19}- w^3 v^{19}- w^2 v^{19}+w v^{19}- v^{19}+w^4
   v^{18}\\
   &+w^3 v^{18}- w v^{18}+w^4 v^{17}+w^3 v^{17}- w^2 v^{17}+w v^{17}- w^4 v^{16}+w^3 v^{16}-
   w v^{16}\cr
   &+v^{16}- w^4 v^{15}- w^3 v^{15}- w^2 v^{15}- w v^{15}+v^{15}- w^3 v^{14}- w v^{14}-
   w v^{13}+v^{13}\cr
   &- w^2 v^{12}+w^4 v^{11}- w^2 v^{11}- w v^{11}- w^4 v^{10}+w^3 v^{10}+w
   v^{10}+v^{10}- w^3 v^9\cr
   &- w^2 v^9- w v^9- w^4 v^8- w^2 v^8- v^8- w^4 v^7- w^3 v^7- w^2 v^7-
   w v^7+v^7\cr
   &+w^4 v^6- w^3 v^6+w v^6- v^6- w^4 v^5- w^3 v^5+w^2 v^5+v^5- w^4 v^4+w^3 v^4\cr
   &+w^2 v^4-
   v^4- w^5 v^3+w^4 v^3+w^3 v^3+w^2 v^3+w v^3+v^3+w^6 v^2- w^5 v^2\cr
   &- w^4 v^2- w^3 v^2- w^2 v^2-
   v^2+w^7 v+w^6 v+w^5 v+w^4 v- w^3 v- w v- w^8\cr
   &+w^7+w^5+w^3+w^2+1,  \cr
\label{app-Hij-2}\tag{A27}
h_{11}=\,& -w^4 v^{20}+w^2 v^{20}- w
   v^{20}+w^4 v^{19}+w^2 v^{19}+w^4 v^{18}- w^3 v^{18}+v^{18}+w^4 v^{17}\\
   &- w^3 v^{17}+w v^{17}-
   v^{17}- w^4 v^{16}- w^3 v^{16}- w^2 v^{16}- v^{16}- w^4 v^{15}- w^3 v^{15}\cr
   &- w
   v^{15}+v^{15}- w^2 v^{14}- w v^{14}+v^{14}- w^3 v^{13}+v^{13}+w^3 v^{12}+w v^{12}+w^4 v^{11}\cr
   &-w^3 v^{11}- w^2 v^{11}- w v^{11}+v^{11}- w^4 v^{10}+w^3 v^{10}- w^2 v^{10}- v^{10}- w^2 v^9\cr
   &+w
   v^9+v^9- w^4 v^8- w^3 v^8+w^2 v^8- v^8- w^4 v^7+w^3 v^7+v^7+w^4 v^6+w^3 v^6\cr
   &+w^2 v^6+w v^6- w^4
   v^5+w^3 v^5- w v^5- w v^4+v^4+w^3 v^3+w^2 v^3+v^3+w^2 v^2\cr
   &- w v^2+w^5 v+w^4 v- w^3 v+w^2 v+w v-
   v- w^7- w^6+w^5- w^4- w^3+w^2- 1, \cr
\label{app-Hij-3}\tag{A28}
h_{12}=\,& w^4 v^{20}+w^2 v^{20}- w v^{20}- v^{20}+w^4 v^{19}- w^3
   v^{19}- w^2 v^{19}- w v^{19}- v^{19}+w^4 v^{18}\\
   &- w^3 v^{18}- w^2 v^{18}- v^{18}+w^4 v^{17}-
   w^3 v^{17}- v^{17}- w^3 v^{16}- w^2 v^{16}- v^{16}+w^4 v^{15}\cr
   &- w^2 v^{15}- w v^{15}-
   v^{15}- w^4 v^{14}- w^3 v^{14}+w^2 v^{14}+v^{14}+w^4 v^{13}+w^3 v^{13}- w v^{13}\cr
   &- v^{13}- w^4
   v^{12}- w^3 v^{12}+w^2 v^{12}- v^{12}+w^3 v^{11}+w^2 v^{11}- v^{11}+w^4 v^{10}- w^2 v^{10}\cr
   &- w
   v^{10}+v^{10}- w^4 v^9- w^3 v^9- w^2 v^9- v^9- w^4 v^8+w^3 v^8+w^2 v^8- w^4 v^7\cr
   &+w^3 v^7- w^2
   v^7+w^3 v^6+w^2 v^6+w v^6+w^4 v^5- v^5- w^4 v^4- w^3 v^4- w^2 v^4\cr
   &+w v^4- w^3 v^3- w^2 v^3+w
   v^3- w^4 v^2- w^3 v^2- w^2 v^2- w v^2- v^2- w^4 v+w^2 v\cr
   &+w v+v+w^6- w^5+w^3+w+1,  \cr
\label{app-Hij-4}\tag{A29}
h_{20}=\,& - w^5 v^{20}- w^4 v^{20}+w^2 v^{20}+w v^{20}- w^3 v^{19}- w^2 v^{19}- v^{19}- w^3
   v^{18}+w^2 v^{18}\\
   &+w v^{18}- v^{18}+w^2 v^{17}+w v^{17}+w^5 v^{16}+w^4 v^{16}- w v^{16}- w^5
   v^{15}- w^4 v^{15}\cr
   &+w^3 v^{15}- v^{15}- w^5 v^{14}- w^4 v^{14}- w v^{14}+v^{14}+w^5 v^{13}+w^4
   v^{13}+w^3 v^{13}\cr
   &- w^2 v^{13}+w v^{13}- v^{13}- w^5 v^{12}- w^4 v^{12}+w^2 v^{12}- w
   v^{12}+v^{12}- w^5 v^{11}\cr
   &- w^4 v^{11}+w^2 v^{11}+w v^{11}- w^5 v^{10}- w^4 v^{10}+w^3 v^{10}-
   w^2 v^{10}- v^{10}- w^5 v^9\cr
   &- w^4 v^9+w^3 v^9+w v^9+w^3 v^8+w v^8- v^8- w^3 v^7- w^2 v^7+w
   v^7+v^7- w^5 v^6\cr
   &- w^4 v^6+w^3 v^6+w v^6- w^5 v^5- w^4 v^5+w^3 v^5- w^2 v^5+v^5- w^5 v^4- w^4
   v^4\cr
   &- w^3 v^4- w v^4- w^4 v^3- w^3 v^3+w^2 v^3- v^3- w^5 v^2+w^3 v^2+w v^2+v^2- w^5 v\cr
   &+w^4
   v+w^2 v- v+w^9+w^5+w^4- w^3- w^2+w+1,  \cr
\label{app-Hij-5}\tag{A30}
h_{21}=\,& - w^5 v^{20}- w^4 v^{20}+w^3 v^{20}+w^2 v^{20}+v^{20}+w^3
   v^{19}+w^2 v^{19}- v^{19}- v^{18}- w v^{17}\\
   &- v^{17}+w^5 v^{16}+w^4 v^{16}- w^3 v^{16}- w^2
   v^{16}+w v^{16}+v^{16}- w^5 v^{15}- w^4 v^{15}- v^{15}\cr
   &- w^5 v^{14}- w^4 v^{14}- w^3 v^{14}-
   w^2 v^{14}+v^{14}+w^5 v^{13}+w^4 v^{13}+w v^{13}+v^{13}\cr
   &- w^5 v^{12}- w^4 v^{12}+w v^{12}+v^{12}-
   w^5 v^{11}- w^4 v^{11}- w^3 v^{11}- w^2 v^{11}- v^{11}\cr
   &- w^5 v^{10}- w^4 v^{10}- w^3 v^{10}-
   w^2 v^{10}- w v^{10}- v^{10}- w^5 v^9- w^4 v^9- w^3 v^9\cr
   &- w^2 v^9+w v^9- v^9+w^3 v^8+w^2
   v^8- v^8- v^7- w^5 v^6- w^4 v^6+w^3 v^6+w^2 v^6\cr
   &- w v^6+v^6- w^5 v^5- w^4 v^5- w^3 v^5+w^2
   v^5- w v^5+v^5- w^5 v^4- w^4 v^4+w^3 v^4\cr
   &+w v^4+v^4- w^5 v^3- w^4 v^3- w^2 v^3- w v^3-
   v^3+w^4 v^2- w^2 v^2+v^2- w^5 v\cr
   &+w^2 v+w v+w^5+w^4- w^3+w^2- w- 1, \cr
\label{app-Hij-6}\tag{A31}
h_{22}=\,& - w^5 v^{20}- w^4 v^{20}- w^2
   v^{20}+w v^{20}- v^{20}- w^5 v^{19}- w^4 v^{19}- w^3 v^{19}- w^2 v^{19}\\
   &- w v^{19}+v^{19}-
   w^5 v^{18}- w^4 v^{18}- w^3 v^{18}- w^2 v^{18}+w v^{18}- v^{18}- w^5 v^{17}- w^4 v^{17}\cr
   &+w^3
   v^{17}- w^2 v^{17}- w v^{17}+w^3 v^{16}- w v^{16}- w^5 v^{15}- w^4 v^{15}- w^3 v^{15}- w^2
   v^{15}\cr
   &- w v^{15}+w^5 v^{14}+w^4 v^{14}+w^3 v^{14}+w^2 v^{14}- w v^{14}- w^5 v^{13}- w^4
   v^{13}+w^3 v^{13}\cr
   &- w^2 v^{13}+w^5 v^{12}+w^4 v^{12}+w^3 v^{12}+w^2 v^{12}- w v^{12}- w^3
   v^{11}+w v^{11}- w^5 v^{10}\cr
   &- w^4 v^{10}- w^3 v^{10}- w^2 v^{10}- w v^{10}+w^5 v^9+w^4 v^9+w^2
   v^9+v^9+w^5 v^8\cr
   &+w^4 v^8+w^3 v^8+w^2 v^8- w v^8+v^8+w^5 v^7+w^4 v^7+w^2 v^7+v^7- w^3 v^6- v^6\cr
   &-w^5 v^5- w^4 v^5+w^3 v^5- w^2 v^5+w v^5+v^5+w^5 v^4+w^4 v^4- w^3 v^4+v^4\cr
   &- w^2 v^3+v^3+w^3
   v^2+w^2 v^2- w v^2+v^2- w^4 v+w^3 v- w v+v+w^3+1.  \nonumber
\end{align}
In \eqref{A0A1-1},
\begin{align}\label{app-A0-1}\tag{A32}
A_0=\,& - w^{11} v^{20}- w^7 v^{20}+w^6 v^{20}+w^5 v^{20}+w^3 v^{20}+w^2 v^{20}- w v^{20}+w^{12} v^{19}+w^{11} v^{19}\\
&- w^{10} v^{19}- w^9 v^{19}+w^7 v^{19}- w^6 v^{19}- w^5 v^{19}- w^3 v^{19}- v^{19}+w^{12} v^{18}+w^{11} v^{18}\cr
&- w^9 v^{18}+w^7 v^{18}+w^6 v^{18}- w^5 v^{18}+w^3 v^{18}+w v^{18}- v^{18}+w^{12} v^{17}+w^{11} v^{17}\cr
&- w^{10} v^{17}+w^6 v^{17}- w^5 v^{17}- w^4 v^{17}+w^2 v^{17}- w v^{17}- v^{17}+w^{12} v^{16}- w^{11} v^{16}\cr
&- w^8 v^{16}- w^5 v^{16}- w^4 v^{16}- w^3 v^{16}+w^2 v^{16}- v^{16}- w^{11} v^{15}+w^{10} v^{15}- w^8 v^{15}\cr
&+w^7 v^{15}- w^6 v^{15}+w^4 v^{15}+w^3 v^{15}- w^2 v^{15}+w v^{15}- v^{15}+w^{12} v^{14}- w^{10} v^{14}\cr
&- w^9 v^{14}- w^8 v^{14}- w^7 v^{14}- w^5 v^{14}+w^4 v^{14}- w^3 v^{14}- w^2 v^{14}+w v^{14}- v^{14}\cr
&- w^{12} v^{13}- w^8 v^{13}- w^6 v^{13}+w^3 v^{13}+w^2 v^{13}- w v^{13}+v^{13}+w^{12} v^{12}- w^9 v^{12}\cr
&+w^6 v^{12}- w^5 v^{12}- w^2 v^{12}- w v^{12}- v^{12}- w^{12} v^{11}+w^{11} v^{11}+w^{10} v^{11}- w^8 v^{11}\cr
&+w^7 v^{11}- w^6 v^{11}+w^5 v^{11}- w^4 v^{11}- w^3 v^{11}- w^2 v^{11}+w v^{11}+v^{11}- w^{11} v^{10}\cr
&+w^8 v^{10}+w^6 v^{10}- w^4 v^{10}- w^3 v^{10}- w^2 v^{10}+v^{10}+w^{12} v^9+w^{10} v^9- w^9 v^9\cr
&- w^8 v^9- w^7 v^9- w^6 v^9- w^5 v^9- w^4 v^9- w^3 v^9- w^2 v^9- w v^9- v^9- w^{12} v^8\cr
&- w^{11} v^8+w^{10} v^8+w^8 v^8- w^7 v^8- w^6 v^8+w^5 v^8+w^4 v^8+w^3 v^8+w^2 v^8- w v^8\cr
&- v^8- w^{12} v^7- w^{11} v^7- w^{10} v^7- w^8 v^7+w^7 v^7- w^6 v^7+w^5 v^7- w^4 v^7- w^2 v^7\cr
&+v^7- w^{12} v^6+w^{11} v^6- w^{10} v^6- w^8 v^6- w^7 v^6- w^6 v^6+w^5 v^6+w v^6+v^6\cr
&- w^{11} v^5- w^9 v^5- w^8 v^5- w^4 v^5- w^2 v^5+w^{12} v^4+w^{11} v^4- w^9 v^4- w^8 v^4\cr
&- w^7 v^4+w^5 v^4+w^4 v^4- w^3 v^4+w v^4+w^{12} v^3- w^9 v^3+w^8 v^3- w^6 v^3+w^2 v^3\cr
&- w v^3+v^3+w^{13} v^2- w^{11} v^2- w^{10} v^2- w^8 v^2+w^7 v^2+w^5 v^2+w^4 v^2+w^3 v^2\cr
&+w v^2+w^{13} v- w^{12} v- w^{11} v+w^{10} v- w^9 v+w^7 v- w^6 v+w^5 v+w^3 v- w v\cr
&- w^{15}+w^{14}+w^{12}- w^{11}+w^{10}- w^8- w^7- w^6- w^3+w^2+w,  \cr
\label{app-A1-1}\tag{A33}
A_1=\,& w^{12} v^{20}- w^8 v^{20}- w^7 v^{20}- w^3 v^{20}+w^2 v^{20}- w v^{20}- v^{20}+w^{12} v^{19}- w^{11} v^{19}\\
&+w^{10} v^{19}+w^9 v^{19}- w^8 v^{19}+w^7 v^{19}+w^6 v^{19}+w^5 v^{19}+w^3 v^{19}- w^2 v^{19}- v^{19}\cr
&+w^{12} v^{18}- w^{11} v^{18}+w^{10} v^{18}- w^9 v^{18}- w^8 v^{18}+w^7 v^{18}- w^6 v^{18}+w^4 v^{18}\cr
&- w v^{18}+v^{18}+w^{12} v^{17}- w^{11} v^{17}- w^{10} v^{17}- w^8 v^{17}- w^7 v^{17}+w^6 v^{17}+w^4 v^{17}\cr
&+w^3 v^{17}+w^2 v^{17}- w v^{17}- v^{17}- w^{11} v^{16}- w^{10} v^{16}- w^8 v^{16}- w^5 v^{16}+w^4 v^{16}\cr
&+w^3 v^{16}+w^2 v^{16}+w^{12} v^{15}+w^{10} v^{15}- w^9 v^{15}+w^8 v^{15}+w^7 v^{15}- w^6 v^{15}\cr
&- w^5 v^{15}+w^4 v^{15}- w^3 v^{15}- w v^{15}- w^{12} v^{14}- w^{11} v^{14}- w^{10} v^{14}- w^9 v^{14}- w^7 v^{14}\cr
&- w^5 v^{14}+w^4 v^{14}- w^3 v^{14}- w v^{14}- v^{14}+w^{12} v^{13}+w^{11} v^{13}- w^{10} v^{13}+w^8 v^{13}\cr
&+w^7 v^{13}+w^4 v^{13}- w^3 v^{13}+w^2 v^{13}- v^{13}- w^{12} v^{12}- w^{11} v^{12}- w^{10} v^{12}+w^9 v^{12}\cr
&+w^8 v^{12}- w^7 v^{12}- w^5 v^{12}- w^4 v^{12}+w^3 v^{12}- w v^{12}+w^{11} v^{11}+w^{10} v^{11}- w^9 v^{11}\cr
&- w^8 v^{11}+w^6 v^{11}- w^5 v^{11}+w^2 v^{11}- w v^{11}- v^{11}+w^{12} v^{10}+w^{10} v^{10}+w^7 v^{10}\cr
&- w^6 v^{10}+w^5 v^{10}- w^4 v^{10}- w^2 v^{10}+v^{10}- w^{12} v^9- w^{11} v^9+w^7 v^9- w^6 v^9\cr
&- w^5 v^9- w^4 v^9+w^2 v^9+w v^9- v^9- w^{12} v^8+w^{11} v^8- w^{10} v^8- w^8 v^8- w^3 v^8\cr
&+w v^8- w^{12} v^7+w^{11} v^7- w^9 v^7+w^8 v^7+w^7 v^7+w^3 v^7+v^7+w^{11} v^6+w^{10} v^6\cr
&- w^8 v^6- w^7 v^6- w^5 v^6- w^3 v^6+w^2 v^6+w v^6+v^6+w^{12} v^5- w^{10} v^5+w^9 v^5\cr
&- w^8 v^5- w^7 v^5- w^6 v^5+w^4 v^5- w^2 v^5- v^5- w^{12} v^4- w^{11} v^4+w^{10} v^4+w^9 v^4\cr
&+w^6 v^4+w^5 v^4+w^4 v^4+w^{11} v^3+w^9 v^3- w v^3- v^3+w^9 v^2+w^8 v^2+w^6 v^2\cr
&+w^4 v^2- w^2 v^2- v^2- w^{11} v- w^8 v+w^6 v- w^4 v+w^3 v- w^2 v- w v- w^{11}\cr
&+w^{10}- w^9- w^8+w^7+w^6+w^4+w^3- w^2+w+1. \nonumber
\end{align}
In \eqref{U/V-1},
\begin{align}\label{app-U-1}\tag{A34}
U=\,& w^{20} v^{20}- w^{19} v^{20}+w^{18} v^{20}+w^{17} v^{20}- w^{14} v^{20}- w^{13} v^{20}+w^{12} v^{20}+w^{11} v^{20}\\
&- w^{10} v^{20}+w^9 v^{20}- w^8 v^{20}- w^6 v^{20}- w^5 v^{20}+w^3 v^{20}- w^{20} v^{19}- w^{17} v^{19}\cr
&+w^{16} v^{19}- w^{12} v^{19}+w^{11} v^{19}- w^9 v^{19}- w^8 v^{19}- w^6 v^{19}- w^5 v^{19}+w^4 v^{19}\cr
&- w^3 v^{19}+w^2 v^{19}+v^{19}- w^{19} v^{18}- w^{15} v^{18}+w^{14} v^{18}+w^{12} v^{18}- w^{11} v^{18}\cr
&+w^9 v^{18}- w^7 v^{18}- w^6 v^{18}+w^5 v^{18}+w^4 v^{18}- w^3 v^{18}+w v^{18}- w^{20} v^{17}- w^{17} v^{17}\cr
&+w^{15} v^{17}- w^{12} v^{17}- w^{10} v^{17}+w^9 v^{17}- w^7 v^{17}- w^6 v^{17}- w^5 v^{17}+w^4 v^{17}\cr
&+w^3 v^{17}+w^2 v^{17}+w v^{17}- v^{17}- w^{20} v^{16}+w^{19} v^{16}+w^{18} v^{16}+w^{17} v^{16}+w^{16} v^{16}\cr
&- w^{14} v^{16}+w^{13} v^{16}+w^{12} v^{16}+w^{11} v^{16}+w^{10} v^{16}+w^7 v^{16}+w^6 v^{16}+w^4 v^{16}\cr
&+w^2 v^{16}+w^{20} v^{15}+w^{19} v^{15}- w^{18} v^{15}+w^{17} v^{15}+w^{15} v^{15}+w^{14} v^{15}- w^{13} v^{15}\cr
&+w^{12} v^{15}- w^{10} v^{15}+w^8 v^{15}+w^5 v^{15}+w^3 v^{15}- w^2 v^{15}- v^{15}- w^{20} v^{14}+w^{18} v^{14}\cr
&+w^{17} v^{14}- w^{16} v^{14}+w^{15} v^{14}- w^{14} v^{14}+w^{11} v^{14}+w^9 v^{14}- w^7 v^{14}- w^6 v^{14}\cr
&+w^5 v^{14}+w^4 v^{14}- w^3 v^{14}+v^{14}+w^{20} v^{13}- w^{19} v^{13}- w^{17} v^{13}+w^{16} v^{13}+w^{15} v^{13}\cr
&- w^{14} v^{13}+w^{13} v^{13}- w^{11} v^{13}+w^9 v^{13}+w^8 v^{13}+w^6 v^{13}- w^5 v^{13}- w^3 v^{13}\cr
&+w^2 v^{13}- v^{13}- w^{20} v^{12}- w^{19} v^{12}- w^{17} v^{12}+w^{16} v^{12}+w^{14} v^{12}+w^{12} v^{12}\cr
&+w^9 v^{12}+w^8 v^{12}+w^7 v^{12}+w^6 v^{12}+w^5 v^{12}- w^2 v^{12}+w v^{12}- v^{12}- w^{20} v^{11}\cr
&+w^{19} v^{11}+w^{18} v^{11}+w^{16} v^{11}+w^{15} v^{11}+w^{12} v^{11}- w^{10} v^{11}- w^9 v^{11}+w^8 v^{11}\cr
&- w^7 v^{11}- w^6 v^{11}+w^5 v^{11}- w^4 v^{11}- w^3 v^{11}- w^2 v^{11}- v^{11}- w^{20} v^{10}+w^{19} v^{10}\cr
&+w^{16} v^{10}+w^{15} v^{10}- w^{14} v^{10}- w^{13} v^{10}- w^{12} v^{10}- w^{11} v^{10}+w^{10} v^{10}- w^9 v^{10}\cr
&- w^8 v^{10}+w^7 v^{10}+w^6 v^{10}+w^5 v^{10}+w^3 v^{10}+w^2 v^{10}+w v^{10}+v^{10}- w^{19} v^9\cr
&- w^{18} v^9+w^{16} v^9+w^{14} v^9+w^{13} v^9+w^{11} v^9+w^{10} v^9- w^9 v^9- w^7 v^9- w^6 v^9\cr
&- w^3 v^9- w^2 v^9+w v^9- w^{19} v^8+w^{17} v^8+w^{16} v^8- w^{15} v^8- w^{14} v^8- w^{13} v^8\cr
&+w^{12} v^8+w^{11} v^8- w^{10} v^8- w^9 v^8- w^8 v^8- w^4 v^8+w^2 v^8+w^{20} v^7+w^{19} v^7\cr
&+w^{17} v^7- w^{15} v^7+w^{14} v^7+w^{13} v^7- w^{11} v^7- w^9 v^7- w^8 v^7+w^7 v^7+w^4 v^7\cr
&+w^2 v^7- w v^7- v^7- w^{19} v^6- w^{17} v^6- w^{16} v^6+w^{15} v^6- w^{13} v^6- w^{12} v^6+w^{10} v^6\cr
&- w^6 v^6+w^5 v^6+w^3 v^6- v^6+w^{20} v^5+w^{18} v^5+w^{17} v^5- w^{15} v^5- w^{14} v^5- w^{12} v^5\cr
&- w^{11} v^5+w^{10} v^5- w^7 v^5+w^6 v^5- w^4 v^5+w^3 v^5+w^2 v^5+v^5- w^{19} v^4+w^{18} v^4\cr
&+w^{17} v^4- w^{15} v^4+w^{14} v^4+w^9 v^4+w^7 v^4- w^6 v^4+w^5 v^4- w^2 v^4- w v^4- v^4\cr
&- w^{20} v^3+w^{18} v^3+w^{15} v^3+w^{14} v^3- w^{13} v^3+w^{12} v^3+w^{10} v^3- w^9 v^3+w^7 v^3\cr
&+w^6 v^3- w^5 v^3+w^4 v^3- w^3 v^3- w^2 v^3- w v^3- w^{20} v^2+w^{19} v^2- w^{18} v^2- w^{17} v^2\cr
&+w^{12} v^2+w^{11} v^2- w^{10} v^2- w^9 v^2- w^4 v^2- w^3 v^2- w^2 v^2+w v^2+w^{19} v- w^{18} v\cr
&- w^{17} v- w^{16} v+w^{15} v- w^{14} v- w^{13} v- w^{11} v+w^{10} v- w^9 v+w^7 v+w^6 v\cr
&+w^5 v+w^2 v- v+w^{19}+w^{18}- w^{16}+w^{13}- w^{12}- w^9- w^8+w^7- w^6- w^5\cr
&+w^4- w^3+w^2-1,  \cr
\label{app-V-1}\tag{A35}
V=\,& w^{19} v^{20}- w^{18} v^{20}- w^{16} v^{20}+w^{15} v^{20}+w^{14} v^{20}+w^{13} v^{20}+w^{12} v^{20}+w^{11} v^{20}\\
&- w^{10} v^{20}+w^9 v^{20}- w^8 v^{20}- w^7 v^{20}- w^5 v^{20}+w^3 v^{20}- w^2 v^{20}- w v^{20}- w^{20} v^{19}\cr
&- w^{18} v^{19}+w^{17} v^{19}- w^{16} v^{19}- w^{15} v^{19}- w^{14} v^{19}- w^{13} v^{19}- w^{12} v^{19}- w^{11} v^{19}\cr
&+w^9 v^{19}- w^8 v^{19}+w^7 v^{19}+w^6 v^{19}+w^5 v^{19}- w^3 v^{19}+w^2 v^{19}+w v^{19}- v^{19}\cr
&- w^{17} v^{18}- w^{16} v^{18}+w^{14} v^{18}+w^{13} v^{18}- w^{10} v^{18}+w^9 v^{18}+w^8 v^{18}- w^5 v^{18}\cr
&+w^3 v^{18}+w^2 v^{18}- w v^{18}+v^{18}- w^{20} v^{17}- w^{18} v^{17}- w^{17} v^{17}- w^{16} v^{17}+w^{12} v^{17}\cr
&- w^{11} v^{17}+w^{10} v^{17}- w^8 v^{17}- w^6 v^{17}- w^5 v^{17}+w^4 v^{17}+w^3 v^{17}- w v^{17}+w^{20} v^{16}\cr
&- w^{19} v^{16}- w^{18} v^{16}+w^{15} v^{16}- w^{14} v^{16}- w^{12} v^{16}- w^{10} v^{16}+w^8 v^{16}+w^7 v^{16}\cr
&+w^6 v^{16}- w^5 v^{16}+w^4 v^{16}+w^3 v^{16}+w^2 v^{16}+w v^{16}+v^{16}- w^{18} v^{15}- w^{15} v^{15}\cr
&+w^{13} v^{15}+w^{11} v^{15}+w^{10} v^{15}- w^8 v^{15}- w^7 v^{15}- w^6 v^{15}+w^5 v^{15}+w^3 v^{15}+w v^{15}\cr
&- v^{15}+w^{19} v^{14}+w^{15} v^{14}- w^{13} v^{14}- w^{11} v^{14}- w^9 v^{14}- w^7 v^{14}- w^6 v^{14}- w^4 v^{14}\cr
&+w^2 v^{14}- w v^{14}- w^{20} v^{13}+w^{19} v^{13}+w^{17} v^{13}+w^{16} v^{13}- w^{15} v^{13}+w^{14} v^{13}\cr
&+w^{13} v^{13}- w^{12} v^{13}+w^{11} v^{13}+w^{10} v^{13}- w^9 v^{13}- w^4 v^{13}+w^2 v^{13}+w v^{13}- v^{13}\cr
&+w^{20} v^{12}+w^{18} v^{12}- w^{17} v^{12}- w^{15} v^{12}- w^{13} v^{12}- w^9 v^{12}+w^8 v^{12}- w^7 v^{12}\cr
&- w^6 v^{12}- w^5 v^{12}- w^4 v^{12}- w^3 v^{12}- w v^{12}+v^{12}- w^{20} v^{11}+w^{19} v^{11}- w^{18} v^{11}\cr
&- w^{17} v^{11}- w^{16} v^{11}+w^{14} v^{11}- w^{12} v^{11}+w^9 v^{11}+w^8 v^{11}+w^7 v^{11}+w^6 v^{11}\cr
&- w^5 v^{11}+w^4 v^{11}- w^3 v^{11}- w v^{11}+v^{11}+w^{20} v^{10}- w^{19} v^{10}- w^{17} v^{10}+w^{16} v^{10}\cr
&+w^{13} v^{10}- w^{11} v^{10}- w^{10} v^{10}- w^9 v^{10}- w^8 v^{10}- w^6 v^{10}- w^5 v^{10}+w^4 v^{10}- w^3 v^{10}\cr
&- w^2 v^{10}- w v^{10}+v^{10}- w^{18} v^9- w^{17} v^9- w^{16} v^9- w^{15} v^9- w^{13} v^9- w^{12} v^9\cr
&+w^9 v^9- w^7 v^9+w^3 v^9+v^9+w^{20} v^8- w^{18} v^8+w^{17} v^8- w^{16} v^8+w^{15} v^8+w^{14} v^8\cr
&+w^{13} v^8+w^{12} v^8- w^{11} v^8- w^9 v^8- w^6 v^8+w^5 v^8- w^4 v^8+w^3 v^8- w^2 v^8+w^{20} v^7\cr
&+w^{19} v^7- w^{18} v^7+w^{17} v^7- w^{16} v^7- w^{15} v^7+w^{11} v^7- w^{10} v^7- w^7 v^7- w^5 v^7\cr
&+w^4 v^7+w^2 v^7+w v^7- w^{20} v^6- w^{18} v^6+w^{17} v^6- w^{16} v^6+w^{15} v^6- w^{13} v^6\cr
&- w^{11} v^6- w^8 v^6- w^7 v^6- w^6 v^6+w^5 v^6- w^4 v^6+w^3 v^6+w^2 v^6+w v^6+v^6\cr
&- w^{19} v^5+w^{18} v^5- w^{17} v^5- w^{15} v^5- w^{14} v^5+w^{12} v^5- w^{11} v^5+w^{10} v^5- w^9 v^5\cr
&- w^8 v^5- w^6 v^5- w^5 v^5+w^4 v^5- w^3 v^5- w^2 v^5- w v^5- v^5+w^{19} v^4+w^{18} v^4\cr
&- w^{17} v^4+w^{16} v^4+w^{15} v^4+w^{14} v^4- w^{12} v^4+w^{10} v^4+w^8 v^4- w^7 v^4- w^6 v^4\cr
&- w^5 v^4+w^4 v^4+w^2 v^4- w v^4+w^{20} v^3- w^{19} v^3+w^{18} v^3+w^{14} v^3+w^{12} v^3\cr
&- w^{11} v^3- w^9 v^3+w^8 v^3- w^7 v^3+w^6 v^3+w^4 v^3+w^3 v^3- w v^3+v^3+w^{18} v^2\cr
&- w^{17} v^2+w^{16} v^2- w^{11} v^2+w^9 v^2- w^8 v^2- w^7 v^2+w^4 v^2+w^3 v^2- w v^2- v^2\cr
&- w^{20} v- w^{19} v- w^{18} v+w^{17} v- w^{16} v+w^{14} v+w^{13} v+w^{12} v+w^{11} v- w^9 v\cr
&- w^8 v- w^7 v- w^5 v+w^4 v- w^3 v+w^2 v- w v+v- w^{20}- w^{19}- w^{18}+w^{17}\cr
&- w^{15}- w^{14}- w^{12}+w^{10}- w^8+w^6- w^5+w^4- w^2+w- 1. \nonumber
\end{align}
In \eqref{3.131},
\begin{align}\label{A9}\tag{A36}
N=\,& v^6 u^{16}- v^7 u^{15}+v^8 u^{14}- v^6 u^{14}- v^5 u^{14}+v^5 u^{13}+v^4 u^{13}- v^3
   u^{13}\\
   &+v^{10} u^{12}+v^8 u^{12}- v^7 u^{12}- v^6 u^{12}- v^5 u^{12}- v^4
   u^{12}+v^3 u^{12}\cr
   &- v^{11} u^{11}- v^9 u^{11}- v^8 u^{11}+v^7 u^{11}+v^6 u^{11}+v^4
   u^{11}- v^3 u^{11}\cr
   &- v^2 u^{11}+v^{12} u^{10}+v^{10} u^{10}- v^9 u^{10}+v^8
   u^{10}+v^5 u^{10}+v u^{10}\cr
   &- u^{10}- v^{11} u^9- v^{10} u^9+v^9 u^9- v^8 u^9+v^7
   u^9- v^6 u^9+v^5 u^9\cr
   &- v^4 u^9- v^3 u^9+v^2 u^9- v u^9+u^9+v^{14} u^8+v^{12} u^8-
   v^{11} u^8\cr
   &+v^{10} u^8- v^9 u^8- v^8 u^8+v^7 u^8- v^6 u^8- v^5 u^8- v^4 u^8- v^3
   u^8\cr
   &- v^2 u^8- u^8- v^{15} u^7- v^{12} u^7+v^{11} u^7+v^9 u^7+v^8 u^7- v^6 u^7\cr
   &+v^5
   u^7- v^3 u^7+v^2 u^7- v u^7+v^{16} u^6- v^{14} u^6- v^{12} u^6+v^{11} u^6\cr
   &- v^9
   u^6- v^8 u^6- v^7 u^6- v^6 u^6- v^5 u^6- v^4 u^6+v^2 u^6- v^{14} u^5\cr
   &+v^{13}
   u^5- v^{12} u^5+v^{10} u^5+v^9 u^5- v^8 u^5+v^7 u^5- v^6 u^5+v^5 u^5\cr
   &+v^4 u^5+v^{13}
   u^4- v^{12} u^4+v^{11} u^4- v^9 u^4- v^8 u^4- v^6 u^4+v^5 u^4\cr
   &+v^4 u^4- v^{13}
   u^3+v^{12} u^3- v^{11} u^3- v^9 u^3- v^8 u^3- v^7 u^3- v^{11} u^2\cr
   &+v^9 u^2- v^8
   u^2+v^7 u^2+v^6 u^2+v^{10} u- v^9 u- v^7 u- v^{10}+v^9- v^8.  \nonumber
\end{align}
In \eqref{hibar},
\begin{align}\label{app-h1-v0}\tag{A37}
\bar h_1=\,& -u^{10} w^7+u^{10} w^5- u^{10} w^4+u^{10} w+u^9 w^7- u^9 w^5+u^9 w^2+u^9 w+u^9+u^8 w^6\\
&- u^8
   w^4+u^8 w^3- u^7 w^7+u^7 w^6- u^7 w^3+u^7 w+u^7- u^6 w^6+u^6 w^4+u^6 w^3\cr
   &- u^6 w^2- u^6 w+u^5
   w^7+u^5 w^5- u^5 w^4+u^5 w^3+u^5 w^2+u^5- u^4 w^6- u^4 w^5\cr
   &+u^4 w^3+u^4 w^2- u^4 w+u^3 w^7+u^3
   w^6- u^3 w^5+u^3 w^4+u^3 w^2- u^3 w- u^3\cr
   &+u^2 w^5- u^2 w^4- u^2 w^3+u^2 w^2- u^2 w- u w^7- u
   w^6+u w^5+u w^4- u w^2\cr
   &- u w- w^8- w^6- w^5- w^4+1,  \cr
\label{app-h2-v0}\tag{A38}
\bar h_2=\,& u^{10} w^5- u^{10} w^3- u^{10} w+u^{10}-
   u^9 w^5+u^9 w^4- u^9 w^3+u^9 w^2- u^9 w+u^8 w^5\\
   &- u^8 w^3- u^8 w- u^8- u^7 w^5- u^7 w^4- u^7
   w^2+u^7 w+u^6 w^5+u^6 w^3+u^6 w\cr
   &- u^5 w^5+u^5 w^4- u^5 w^3+u^5 w^2+u^4 w^5+u^4 w^4- u^4 w^3- u^4
   w- u^3 w^5\cr
   &+u^3 w^4+u^3 w^3+u^3 w^2- u^3 w+u^2 w^5- u^2 w^3+u^2 w^2- u^2+u w^5+u w^4\cr
   &+u w^2- u
   w+w^9+w^5- w^4- w^3+w^2- 1.\nonumber
\end{align}
In \eqref{Ri},
\begin{align}\label{app-res-h1-L}\tag{A39}
R_1=\,& -(w+1)^2 (w-1) (w^3+w^2- w+1) (w^7- w^6+w^5- w^4+w+1)\\
&\cdot (w^8-
   w^7+w^5- w^4- w^3+w-1) (w^{11}+w^8- w^7+w^3- w+1)\cr
   &\cdot (w^{17}+w^{16}-
   w^{13}+w^7- w^6- w^5+w^3+w^2- w+1)\cr
   &\cdot (w^{18}+w^{17}- w^{14}+w^{13}+w^{12}-
   w^{11}+w^{10}- w^9- w^7- w^6+w^4+w^3+w^2+1), \cr
\label{app-res-h2-L}\tag{A40}
R_2=\,& (w+1)^2 (w^{11}+w^8- w^7+w^3-
   w+1)\\
   &\cdot (w^{19}- w^{17}- w^{13}- w^{12}+w^{11}+w^9- w^8+w^6- w^4+w^2+1)\cr
   &\cdot
   (w^{31}- w^{29}- w^{28}+w^{27}- w^{25}- w^{22}- w^{21}- w^{19}+w^{17}+w^{16}- w^{15}\cr
   & \kern1em -w^{14}- w^{13}- w^{11}+w^{10}- w^9+w^8+w^7- w^6+w^4+w^2- w-1). \nonumber
\end{align}
In \eqref{hibar1},
\begin{align}\label{app-h1-u0}\tag{A41}
\bar h_1=\,&  v^{10} w^7- v^{10} w^6+v^{10} w^5- v^{10} w^4- v^{10} w^3- v^{10} w^2+v^{10} w- v^9
   w^7+v^9 w^6\\
   &- v^9 w^5- v^9 w^4- v^9 w^3+v^9 w^2+v^9 w+v^9+v^8 w^6- v^8 w^5+v^8 w^3- v^8 w^2\cr
   &+v^7
   w^7+v^7 w^5- v^7 w^4- v^6 w^6- v^6 w^5+v^6 w+v^6- v^5 w^7+v^5 w^6- v^5 w^4\cr
   &- v^5 w^3+v^5
   w^2+v^5 w+v^5- v^4 w^6+v^4 w^4+v^4 w^3+v^4 w- v^3 w^7- v^3 w^6\cr
   &- v^3 w^5+v^3 w^4+v^3 w^3- v^3
   w^2- v^3 w+v^3+v^2 w^5- v^2 w^4+v^2 w- v^2+v w^7\cr
   &+v w^6+v w^5- v w^3+v w^2- v w- v- w^8-
   w^7+w^6- w^5+w^4- w^3+w^2- w, \cr
\label{app-h2-u0}\tag{A42}
\bar h_2=\,& v^{10} w^5+v^{10} w^4- v^{10} w^2+v^{10} w- v^9 w^5+v^9 w^4- v^9
   w^3+v^9+v^8 w^5+v^8 w^4\\
   &- v^8 w^2+v^8 w- v^8- v^7 w^5+v^7 w^3- v^7 w^2- v^7 w- v^7+v^6 w^5+v^6
   w^4- v^6 w^3\cr
   &- v^6 w^2- v^5 w^5+v^5 w^4- v^5 w^3+v^5 w+v^4 w^5+v^4 w^3- v^4 w^2- v^4 w- v^3
   w^5\cr
   &+v^3 w^4+v^3 w^3- v^3+v^2 w^5+v^2 w^4+v^2 w^2+v^2 w+v w^5- v w^3+v w^2+v w\cr
   &+v+w^9+w^5- w^4-
   w^3+w^2.  \nonumber
\end{align}
In \eqref{Ri-1},
\begin{align}\label{app-res-h1-Lbar}\tag{A43}
R_1=\,& w (w^6+w^5+w^4+w^3- w+1)\\
&\cdot (w^{11}+w^{10}+w^9- w^8+w^7- w^5+w^4+w^3+w^2-
   w+1)\cr
   &\cdot (w^{21}- w^{20}- w^{19}+w^{18}- w^{17}+w^{16}+w^{15}- w^{13}- w^{12}-
   w^{11}+w^{10}- w^9\cr
   &\kern1em - w^7+w^6+w^5+w^4- w^3+w+1)\cr
   &\cdot (w^{71}+w^{70}+w^{68}- w^{67}+w^{66}-
   w^{65}- w^{64}- w^{63}+w^{61}+w^{59}- w^{57}- w^{56}\cr
   &\kern1em - w^{53}+w^{51}- w^{50}-
   w^{48}+w^{47}+w^{46}+w^{45}+w^{43}- w^{42}+w^{40}- w^{38}\cr
   &\kern1em - w^{37}- w^{36}- w^{35}+w^{33}-
   w^{32}+w^{29}- w^{27}+w^{26}- w^{25}- w^{24}+w^{21}\cr
   &\kern1em +w^{20}-
   w^{18}+w^{17}+w^{13}+w^{12}+w^{10}+w^9+w^8+w^7+w^6- w^5+w^3- w-1), \cr
\label{app-res-h2-Lbar}\tag{A44}
R_2=\,& w^2 (w^2-
   w-1) (w^{11}+w^{10}+w^9- w^8+w^7- w^5+w^4+w^3+w^2- w+1)\\
   &\cdot (w^{105}-
   w^{104}+w^{103}- w^{101}+w^{100}- w^{99}- w^{97}- w^{96}+w^{95}- w^{94}- w^{93}\cr
   &\kern1em  - w^{92}-
   w^{91}- w^{90}+w^{89}+w^{87}+w^{86}+w^{85}- w^{84}- w^{83}+w^{82}-
   w^{80}\cr
   &\kern1em +w^{79}+w^{78}+w^{77}+w^{76}- w^{74}- w^{73}- w^{72}+w^{70}+w^{69}+w^{68}+w^{67}\cr
   &\kern1em  -w^{66}+w^{65}+w^{64}+w^{63}- w^{62}+w^{60}- w^{59}+w^{58}+w^{57}- w^{56}- w^{55}\cr
   &\kern1em   -w^{54}+w^{53}- w^{51}- w^{50}+w^{47}+w^{44}+w^{43}- w^{42}+w^{38}- w^{37}- w^{36}\cr
   &\kern1em  +w^{32}-
   w^{31}+w^{29}- w^{28}+w^{27}- w^{23}+w^{22}+w^{21}- w^{20}- w^{19}- w^{18}\cr
   &\kern1em  -w^{17}+w^{13}+w^{11}+w^9- w^8+w^7- w^6- w^4- w^3- w^2- w-1).   \nonumber
\end{align}



\end{document}